\RequirePackage{fix-cm}
\RequirePackage{amsthm}
\documentclass[sn-mathphys-num,pdflatex]{sn-jnl}

\usepackage{graphicx}
\usepackage{multirow}
\usepackage{amsmath,amssymb,amsfonts}
\usepackage{mathrsfs}
\usepackage[title]{appendix}
\usepackage{xcolor}
\usepackage{textcomp}
\usepackage{manyfoot}
\usepackage{booktabs}
\usepackage{algorithm}
\usepackage{algorithmicx}
\usepackage{algpseudocode}
\usepackage{listings}
\usepackage{anyfontsize}

\usepackage{hyperref}
\usepackage{lmodern}
\usepackage{enumerate} 
\usepackage{enumitem}

\newtheorem{theorem}{Theorem}[section]
\newtheorem{definition}[theorem]{Definition}
\newtheorem{lemma}[theorem]{Lemma}
\newtheorem{corollary}[theorem]{Corollary}
\newtheorem{example}[theorem]{Example}
\newtheorem{remark}[theorem]{Remark}

\numberwithin{equation}{section} 
\numberwithin{algorithm}{section} 

\newcounter{assumcounter}
\newtheorem{assum}[assumcounter]{Assumption}

\newcounter{oraclecounter}
\newtheorem{oracle}[oraclecounter]{Oracle}

\newcommand{\eps}{\varepsilon}
\newcommand{\R}{\mathbb{R}}
\newcommand{\N}{\mathbb{N}}
\newcommand{\T}{\mathcal{T}}
\newcommand{\calR}{\mathcal{R}}
\newcommand{\deriv}{D}
\newcommand{\epsmax}{\varepsilon_{max}}
\newcommand{\C}{\mathcal{C}}
\newcommand{\glrad}{\Delta}
\newcommand{\hx}{\hat{x}}
\newcommand{\lc}[1]{lower-$\C^{#1}$}
\newcommand{\jthr}{j_{thr}}
\newcommand{\Bcl}{\bar{B}}
\newcommand{\algApproxW}{Alg.\ \ref{algo:approx_W}}
\newcommand{\algLocal}{Alg.\ \ref{algo:local_method}}
\newcommand{\algGlobal}{Alg.\ \ref{algo:conceptual_global_method}}

\DeclareMathOperator*{\conv}{conv}
\DeclareMathOperator*{\cl}{cl}
\DeclareMathOperator*{\argmin}{arg\,min}

\newcommand{\superpolyak}{\texttt{SuperPolyak}}
\newcommand{\vubundle}{\texttt{VUbundle}}
\newcommand{\hanso}{\texttt{HANSO}}
\newcommand{\ipopt}{\texttt{IPOPT}}
\newcommand{\mexipopt}{\texttt{mexIPOPT}}
\newcommand{\epsthr}{\eps_{\text{thr}}}

\begin{document}

\title{Superlinear convergence in nonsmooth optimization via higher-order cutting-plane models}

\author*[1]{\fnm{Bennet} \sur{Gebken}}\email{bennet.gebken@tum.de}

\author[1]{\fnm{Michael} \sur{Ulbrich}}\email{mulbrich@ma.tum.de}

\affil[1]{\orgdiv{Department of Mathematics}, \orgname{Technical University of Munich}, \orgaddress{\street{Boltzmannstr. 3}, \city{Garching b. München}, \postcode{85748}, \country{Germany}}}

\abstract{
    A cutting-plane model for a nonsmooth function is the maximum of several first-order expansions centered at different points. Using such a model in a bundle method leads to linear convergence (of serious steps) to a minimum. In smooth optimization, superlinear convergence can be achieved by using higher-order models. We show that the same is true for the nonsmooth case, i.e., we show that cutting-plane models involving higher-order expansions can be used to achieve superlinear convergence in nonsmooth optimization. We first formally define higher-order cutting-plane models for \lc{2} functions and derive an error estimate. Afterwards, we construct a trust-region bundle method based on these models that achieves local superlinear convergence of serious steps, and overall superlinear convergence for certain finite max-type functions. Finally, we verify the superlinear convergence in numerical experiments.
}

\keywords{nonsmooth optimization, nonconvex optimization, convergence rates, superlinear convergence, bundle method, trust-region method, \lc{2}}
\pacs[MSC Classification]{90C30, 90C26, 65K10, 49J52}

\maketitle

\section{Introduction} \label{sec:introduction}

Given a function $f$ and a point $x^j$, a fundamental strategy for finding a point $x^{j+1}$ with $f(x^{j+1}) < f(x^j)$ is to generate a simple local model of $f$ around $x^j$ and then minimize the model. If this is done iteratively and the decrease per iteration is sufficient, then a sequence $(x^j)_j$ is obtained that converges to a local minimum of $f$, with a speed that depends on the accuracy of the model. When $f$ is smooth, Taylor expansion can be used for building the model. For example, first-order Taylor expansion leads to the steepest descent method, converging linearly under certain assumptions (\cite{NW2006}, Thm.\ 3.4), and a more accurate second-order Taylor expansion leads to Newton's method, typically converging (locally) quadratically (\cite{NW2006}, Thm.\ 3.5).  When $f$ is nonsmooth, Taylor expansion fails to yield a useful model (cf.\ \cite{L1989}, Sec.\ 3). In this case, a standard approach is to use a cutting-plane model, which is the piecewise linear maximum of several first-order Taylor expansions (potentially using subgradients instead of gradients) at points close to the current point. In accordance with the first-order model in the smooth case, bundle methods using this model achieve linear convergence of so-called serious steps (when assuming a subdifferential error bound, cf.\ \cite{ASS2023}). This analogy for first-order models naturally raises the question whether the maximum of higher-order Taylor expansions, i.e., a higher-order cutting-plane model, can be used as a model to achieve higher orders of convergence. We show that for \lc{2} functions \cite{RW1998} satisfying a polynomial growth assumption, this is indeed the case, yielding local R-superlinear convergence of serious steps, with an order that depends on the order of the model and the order of growth. Furthermore, stepwise R-superlinear convergence of the overall sequence is shown for finite max-type functions. 

While methods with superlinear convergence, like quasi-Newton methods, have been the state of the art in smooth optimization for a long time, this speed is significantly more difficult to (provably) achieve in nonsmooth optimization (and was even described as a ``wondrous grail'' in \cite{MS2012}). Note that we consider convergence rates in the sense of Q- or R-convergence (see, e.g., \cite{NW2006}, Appendix A.2) with respect to oracle calls, which differ from \emph{non-asymptotic} convergence rates, like the ones considered in \cite{ZLJ2020,DG2023}. Furthermore, we emphasize that we are concerned with black-box optimization, in the sense that we can evaluate the objective value and its (generalized) derivatives, but do not have access to any potential nonsmooth structure of the objective like a DC \cite{TD2018} or a composite structure \cite{KL2020}. To the best of the authors' knowledge, there are only few methods that are able to achieve superlinear convergence for such a general case: 
\begin{itemize}
    \item The $\mathcal{V}\mathcal{U}$\emph{-algorithm} \cite{MS2005,LS2020} is based on the observation that locally around the minimum of a nonsmooth function $f$, the variable space can often be decomposed into a $\mathcal{V}$-space, in which $f$ grows linearly, and a $\mathcal{U}$-space, in which $f$ behaves smoothly. If these spaces are known, then Newton-like steps can be performed along $\mathcal{U}$ to inherit the fast convergence of Newton's method. However, automatically identifying these spaces in a black-box setting is difficult and has, to the authors' knowledge, only been achieved for certain convex, piecewise differentiable functions when an active index is available \cite{DSS2009}.
    \item The method \emph{SuperPolyak} \cite{CD2024} is a modification of the bundle method by Polyak \cite{P1969}. It achieves superlinear convergence for functions with a sharp minimum (around which $f$ grows linearly) when the optimal value is available.
    \item The \emph{bundle-Newton method} \cite{LV1998} is based on using (convexified) second-order cutting-plane models together with ideas from sequential quadratic programming (SQP), but superlinear convergence can only be proven under a certain smoothness assumption on $f$.
    \item Finally, the $k$-\emph{bundle Newton method} from \cite{LW2019} also combines second-order models with SQP, resulting in a method that achieves local stepwise quadratic convergence on a certain class of well-behaved, strongly convex, piecewise differentiable functions. However, the correct choice of the (fixed) bundle size requires some additional knowledge of $f$.
\end{itemize}
(We mention that the unpublished preprint \cite{G2025} can be seen as a predecessor to the current work. It used the maximum of second-order Taylor expansions as a model, but a different algorithmic setting and a different function class meant that no results on the speed of convergence could be given.)

The idea of this work is to use the maximum of Taylor expansions of arbitrary order $q \in \N$ with centers $y \in W$ (with $W$ being the ``bundle'' in the language of bundle methods) as models for nonsmooth functions $f$. Fig.\ \ref{fig:sketch_model} shows a visualization of this idea.
\begin{figure}
    \centering
    \parbox[b]{0.32\textwidth}{
        \centering 
        \includegraphics[width=0.32\textwidth]{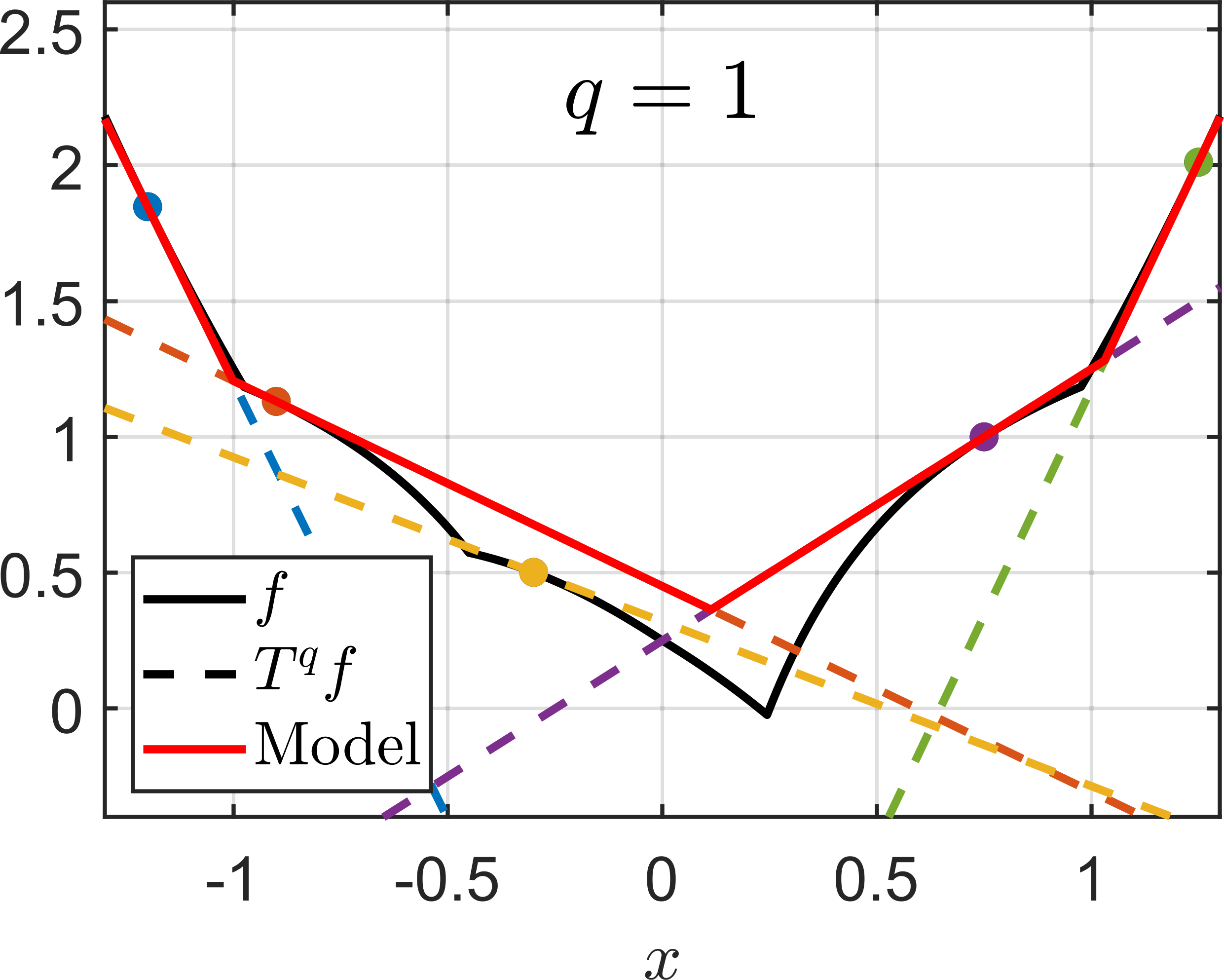}\\
    }
    \parbox[b]{0.32\textwidth}{
        \centering 
        \includegraphics[width=0.32\textwidth]{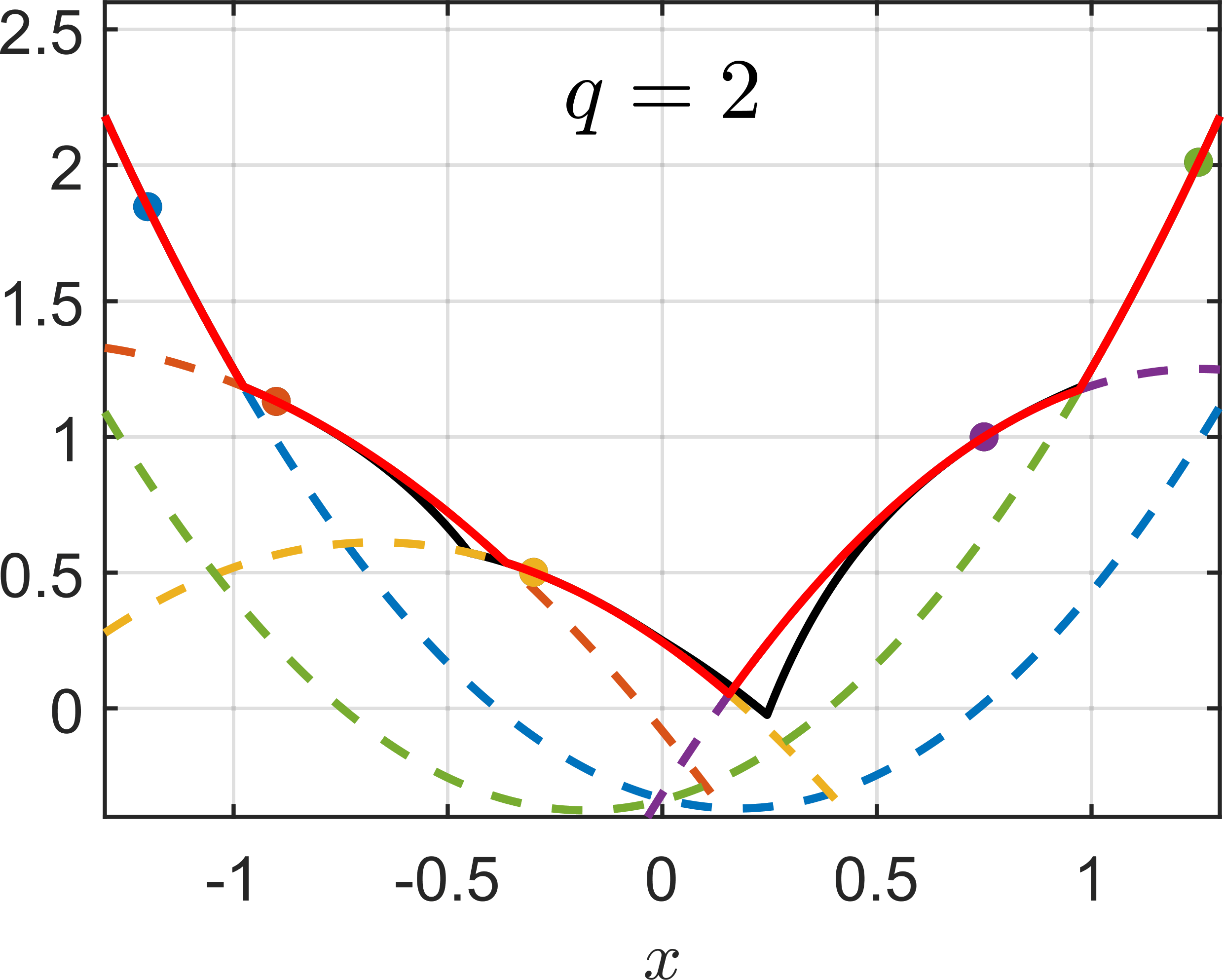}\\
    }
    \parbox[b]{0.32\textwidth}{
        \centering 
        \includegraphics[width=0.32\textwidth]{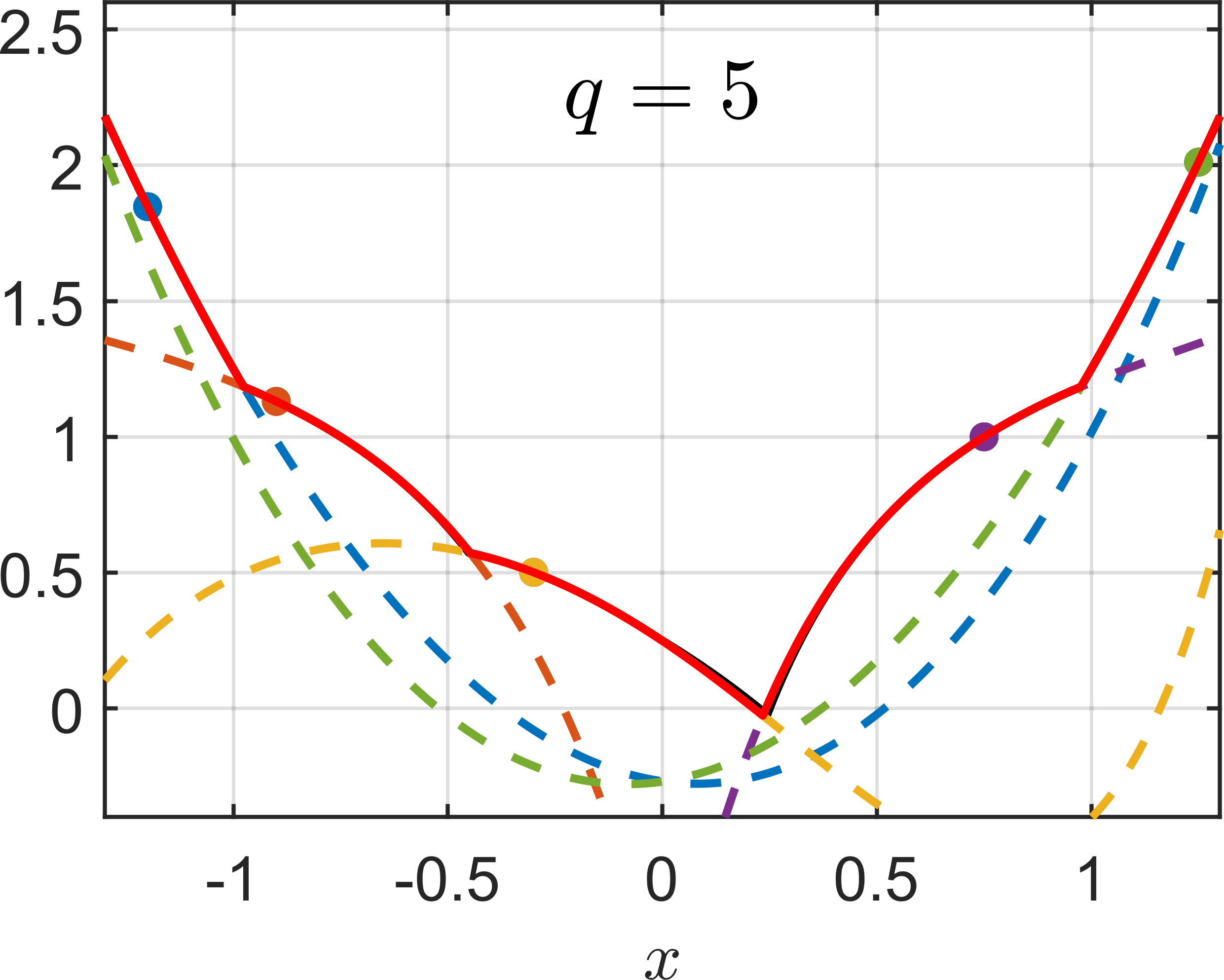}\\
    }
    \caption{Higher-order cutting-plane models (red) for the nonconvex function $f : \R \rightarrow \R$, $x \mapsto \max(\{ -(x + 0.5)^2 + 0.25 |x|^{3/2} + 0.5, x^2 + 0.5 |x|^{3/2} - 0.25, -1/(|x| + 0.25) + 2 \})$ for different orders $q$ of Taylor expansions (dashed) and the centers $W = \{ -1.2, -0.9, -0.3, 0.75, 1.25 \}$ (dots). The different colors for the Taylor expansions correspond to different centers.}
    \label{fig:sketch_model}
\end{figure}
Due to the max-type nature of these models, they only work well when $f$ itself has a max-type structure. As such, we restrict ourselves to the case where $f$ is a \lc{2} function, which means that it can locally be represented as the maximum of infinitely many $\C^2$ (or even $\C^\infty$) functions. (This class of functions is closely related to the class of \emph{weakly convex} and the class of \emph{prox-regular} functions, cf.\ \cite{DM2005}, Rem.\ 1.1.) In particular, the higher-order derivatives of these underlying smooth functions act as the ``higher-order generalized derivatives'' of $f$ at points where it is not differentiable. Since our models may be nonconvex in case $f$ is nonconvex, we only minimize them over a closed $\eps$-ball around the current iterate, so that the resulting method can be seen as a higher-order version of a trust-region bundle method (see, e.g., \cite{HUL1993b}, Sec.\ 2.1). In each iteration, it first generates a finite subset $W$ of the $\eps$-ball around the current iterate $x^j$ (via ``null steps'') for which the resulting model approximates $f$ sufficiently well, which can be measured using a Taylor-like error estimate. Afterwards, the minimum of this model is computed, yielding the next iterate $x^{j+1}$ (the ``serious step''). In terms of convergence, we prove that if $f$ satisfies a growth assumption, the initial trust-region radius is small enough, and the initial trust region contains the minimum $x^*$ of $f$, then the sequence $(x^j)_j$ converges R-superlinearly to the minimum. However, while this shows that our method is efficient in terms of oracle calls, we point out that the subproblem of minimizing the model is a non-quadratic, nonconvex (but smooth) optimization problem itself, which may be significantly more expensive to solve than the linear or quadratic subproblems in other methods.

It is important to note that the requirements of our local convergence result demand significantly more than just the initial $x^1$ being close to $x^*$, as we also have to explicitly know a small enough upper bound $\eps_1$ such that the initial trust region, i.e., the $\eps_1$-ball around $x^1$, contains $x^*$. As such, the question of how this initial data can be provided, or, in other words, how our method can be globalized, is crucial. To not overload the current work, we only give a sketch of the globalization here, and refer to \cite{GU2026b} (which was written in parallel) for the details. The idea is to use a global trust-region bundle method as a wrapper for the local method, by attempting the local method whenever the trust region in the global method is decreased. Using this idea, we can prove global convergence with transition to local R-superlinear convergence for certain finite max-type functions (cf.\ \cite{GU2026b}, Cor.\ 3.1).

The remainder of this work is structured as follows: In Sec.\ \ref{sec:preliminaries} we introduce the notation and the basic concepts that we use. Sec.\ \ref{sec:higher_order_cutting_plane_models} introduces the higher-order cutting-plane models and derives error estimates for the distance of the model minima to the actual minimum under a polynomial growth assumption. In Sec.\ \ref{sec:local_superlinear_method} we use these models to construct the trust-region bundle method and prove local R-superlinear convergence. The globalization of this method from \cite{GU2026b} is summarized in Sec.\ \ref{sec:globalization}. In Sec.\ \ref{sec:numerical_results} the R-superlinear convergence is verified in numerical experiments. Finally, in Sec.\ \ref{sec:discussion_and_outlook}, we discuss future work.

A Matlab implementation of our method (and the globalization from \cite{GU2026b}), including scripts for the reproduction of all experiments shown in this work, is available at \url{https://github.com/b-gebken/higher-order-trust-region-bundle-method}.

\section{Preliminaries} \label{sec:preliminaries}

    In this section, we introduce our notation and the class of objective functions that we consider. Let $\| \cdot \|$ be the Euclidean norm on $\R^n$. For $\eps \geq 0$ let $\Bcl_\eps(x) := \{ y \in \R^n : \| y - x \| \leq \eps \}$. The closure and the convex hull of a set $A \subseteq \R^n$ are denoted by $\cl(A)$ and $\conv(A)$, respectively. The sum of two sets $A, B \subseteq \R^n$ is defined as $A + B := \{ a + b : a \in A, b \in B \}$. 
    
    Since there is no standard notation for higher-order derivatives, we introduce the notation that we use here (from \cite{B1964}, Chapter V) for completeness. To this end, let $U \subseteq \R^n$ be open and $f : U \rightarrow \R$. For $q \in \N \cup \{ \infty \}$ we say that $f$ is $\C^q$ if it is $q$-times continuously differentiable at every $x \in U$. For $m \in \{ 1, \dots, q \}$ and $y, z, v \in \R^n$, denote $\deriv^{0} f(y)(v)^0 := f(y)$ and
    \begin{equation} \label{eq:def_taylor_series}
        \begin{aligned}
            \deriv^{m} f(y)(v)^m &:= \sum_{i_1 = 1}^n \cdots \sum_{i_m = 1}^n \partial_{i_1} \cdots \partial_{i_m} f(y) v_{i_1} \cdots v_{i_m}, \\
            T^q f(z,y) &:= \sum_{m = 0}^q \frac{1}{m!} \deriv^{m} f(y)(z - y)^m.
        \end{aligned}
    \end{equation}
    Note that $\deriv^{1} f(y)(v)^1 = \nabla f(y)^\top v$ and $\deriv^{2} f(y)(v)^2 = v^\top \nabla^2 f(y) v$. If $U$ is convex and $f$ is $\C^{q+1}$ for $q \in \N$, then by Taylor's theorem (see, e.g., \cite{B1964}, Thm.\ 20.16), for any $y, z \in U$ there is some $a \in \conv(\{ y, z \})$ such that $f(z) = T^q f(z,y) + R^{q+1}(z,y)$ for
    \begin{align} \label{eq:Taylor_remainder}
        R^{q+1}(z,y) = \frac{1}{(q+1)!} \deriv^{q+1} f(a)(z - y)^{q+1}.
    \end{align}
    An important observation for our results will be that continuity of the partial derivatives of $f$ up to order $q+1$ implies that for any bounded, convex set $V \subseteq U$, there is some $K > 0$ such that 
    \begin{align*}
        | R^{q+1}(z,y) | \leq K \| z - y \|^{q+1} \quad \forall y, z \in V.
    \end{align*}
    
    The objective functions we consider in this work belong to the class of \lc{q} functions \cite{RW1998}, which can be defined as follows:
    \begin{definition} \label{def:lower_Ck}
        A function $f : U \rightarrow \R$ is called \emph{lower-}$\C^q$ for $q \in \N \cup \{ \infty \}$, if, for every $\Bar{x} \in \R^n$, there are an open neighborhood $V \subseteq \R^n$ of $\Bar{x}$, a compact topological space $S$, and $\C^{q}$ functions $f_s : V \rightarrow \R$, $s \in S$, such that 
        \begin{align} \label{eq:lower_Ck_representation}
            f(x) = \max_{s \in S} f_s(x) \quad \forall x \in V
        \end{align}
        and $f_s$ and all its partial derivatives up to order $q$ depend continuously on $(s,x) \in S \times V$. 
    \end{definition}
    
    We refer to \eqref{eq:lower_Ck_representation} as a \emph{representation} of $f$ around $x$, with an \emph{index set} $S$. We call $A(x) := \{ s \in S : f(x) = f_s(x) \}$ the \emph{active set} of $f$ at $x$. The functions $f_s$, $s \in S$, are called the \emph{selection functions}, and we say that a selection function $f_s$ is \emph{active} at $x$ if $s \in A(x)$. If $S$ is finite, then we say that $f$ is a \emph{finite max-type function} on $V$. By \cite{RW1998}, Thm.\ 10.31, \lc{1} functions are locally Lipschitz continuous and their Clarke subdifferential \cite{C1990} is given by $\partial f(x) = \conv(\{ \nabla f_s(x) : s \in A(x) \})$. By \cite{RW1998}, Cor.\ 10.34, every \lc{2} function is automatically \lc{\infty}. For a general locally Lipschitz continuous functions $f : U \rightarrow \R$ and a point $x \in \R^n$, we say that $x$ is \emph{critical} if $0 \in \partial f(x)$. For a \lc{q} function, this means that there is a convex combination of gradients of active selection functions at $x$ that is zero.

\section{Higher-order cutting-plane models and error estimates} \label{sec:higher_order_cutting_plane_models}

    In this section, we first introduce higher-order cutting-plane models for the local approximation of \lc{q} functions. Afterwards, we derive an upper bound for the pointwise distance of these models to the original function. Combined with a growth assumption, this allows us to derive an estimate for the distance of the model minima to the actual minima. Finally, we discuss how these results can be used to construct solution methods with local R-superlinear convergence. 

    Before introducing the models, we first have to discuss the oracle information that we assume to be available. For first-order information, the standard oracle assumption for a locally Lipschitz continuous function $f$ is that for each $x$, we have access to $f(x)$ and a subgradient $\xi \in \partial f(x)$ (see, e.g., \cite{L1989}, (1.10)). If $f$ is \lc{q}, then any representation around $x$ yields the same first-order information $\partial f(x) = \conv(\{ \nabla f_s(x) : s \in A(x) \})$, so the subgradients are the convex combinations of gradients of active selection functions in any representation. In particular, if $x$ is a point where $f$ is $\C^1$, then $\partial f(x) = \{ \nabla f(x) \}$, so for any representation around $x$, all gradients of selection functions that are active at $x$ must equal $\nabla f(x)$. Unfortunately, for higher-order information, this relationship does not persist: for example, consider the function $f : (-1,1) \rightarrow \R$, $x \mapsto x^2$, which has the representation $f(x) = \max_{s \in S} f_s(x)$ with $S = [-1,1]$ and $f_s(x) = s^2 + 2s(x-s)$. Then for any $x \in (-1,1)$ we have $\nabla^2 f(x) = 2$,  but $\nabla^2 f_s(x) = 0$ for all $s \in A(x)$. More generally, by \cite{RW1998}, Thm.\ 10.33, for each \lc{2} function, there are representations with quadratic selection functions, such that no derivative information of $f$ of order $3$ or higher can be obtained from such selection functions. This means that the higher-order derivative information we obtain from selection functions heavily depends on the representation of $f$, and that it may not even correspond to higher-order derivatives of $f$ at smooth points.
    
    For the above reasons, we do not just assume $f : U \rightarrow \R$ to be a \lc{q} function on an open set $U \subseteq \R^n$, but also fix a single, global representation on $U$. By \cite{RW1998}, Prop.\ 10.54, if $U$ is bounded and $f$ can be extended to a \lc{q} function on an open superset $U'$ of the closure $\cl(U)$, then such a global representation always exists. In particular, if $f$ is a \lc{q} function on $\R^n$, then there is a global representation on any bounded set $U \subseteq \R^n$. Since the method we derive in this work always generates bounded sequences (cf.\ Lem.\ \ref{lem:global_convergence_local_algo}), assuming a large enough $U$ avoids any practical restrictions of this assumption. (Later on, in the local convergence results, $U$ can be thought of as a small open neighborhood of the minimum of $f$.) Furthermore, to be able to use the remainder formula \eqref{eq:Taylor_remainder}, we assume that $U$ is convex and that the selection functions are $\C^{q+1}$. More formally, for $f : U \rightarrow \R$, consider the following assumption:
    \begin{assum} \label{assum:A1}
        The set $U$ is open and convex. For $q \in \N$ there are a compact topological space $S$ and $\C^{q+1}$ functions $f_s : U \rightarrow \R$, $s \in S$, such that 
        \begin{align*}
            f(x) = \max_{s \in S} f_s(x) \quad \forall x \in U
        \end{align*}
        and $f_s$ and all its partial derivatives up to order $q+1$ depend continuously on $(s,x) \in S \times U$. 
    \end{assum}

    Clearly, \ref{assum:A1} implies that $f$ is \lc{q+1} on $U$, and, since $q+1 \geq 2$, it is even \lc{\infty}. We assume that we have access to the following oracle information for functions satisfying \ref{assum:A1}:
    \begin{oracle} \label{oracle:1}
        For a function $f : U \rightarrow \R$ satisfying \ref{assum:A1} and for each $x \in U$, we have access to the objective value $f(x)$ and the maps $v \mapsto \deriv^{m} f_{s(x)}(x)(v)^m$ for some $s(x) \in A(x)$ and all $m \in \{1, \dots, q\}$.
    \end{oracle}
    
    We emphasize that the oracle only implies that we have access to the derivatives of $f_{s(x)}$ at $x$ up to order $q$, but not to the index $s(x)$ itself or other information about the function $f_{s(x)}$. In particular, when evaluating $f$ or its derivatives in two different points $x^1, x^2 \in U$, we do not know whether $s(x^1) = s(x^2)$. (In a numerical setting, one typically does not encounter points at which $f$ is not $\C^\infty$ unless specific initial data is chosen. As such, for the numerical experiments in Sec.\ \ref{sec:numerical_results}, we simply use the derivatives of $f$ itself. The discrepancy of this ``practical oracle'' to Oracle \ref{oracle:1} is discussed in Sec.\ \ref{sec:discussion_and_outlook}.)

    Using the information provided by Oracle \ref{oracle:1}, the $q$\emph{-order cutting-plane model} can be defined as follows: Let $q \in \N$ and assume that $f : U \rightarrow \R$ satisfies \ref{assum:A1}. For $x \in U$, $\eps \geq 0$ with $\Bcl_\eps(x) \subseteq U$, a nonempty, finite set $W \subseteq \Bcl_\eps(x)$, and $z \in \R^n$, let
    \begin{align} \label{eq:def_model}
        \T^{q,W}(z) 
        := \max_{y \in W} T^q f_{s(y)}(z,y)
        = \max_{y \in W} \sum_{m = 0}^q \frac{1}{m!} \deriv^{m} f_{s(y)}(y)(z - y)^m.
    \end{align}
    Since $W$ is finite and $z \mapsto T^q f_{s(y)}(z,y)$ is $\C^\infty$ for all $y \in W$, the function $\T^{q,W}$ is \lc{\infty} and, in particular, locally Lipschitz continuous. Fig.\ \ref{fig:sketch_model} shows the graph of $\T^{q,W}$ (red) for $q \in \{1,2,5\}$.
    
    In the following, we derive an error estimate for these models. To this end,
    denote $s(W) := \{ s(y) : y \in W \} \subseteq S$ and
    \begin{align*}
        f^W(z) := \max_{y \in W} f_{s(y)}(z), \quad \calR^{q,W}(z) := f^W(z) - \T^{q,W}(z).
    \end{align*}
    Clearly, $f^W = f$ if $s(W) = S$. In a sense, $f^W$ is the best approximation of $f$ we can hope to achieve when only using the oracle information at points from $W$. By applying Taylor's theorem to each selection function, we obtain the following upper bound for the error $|\calR^{q,W}(z)|$ of the model $\T^{q,W}$ in $\Bcl_\eps(x)$:
    
    \begin{lemma} \label{lem:q_order_error_estimate}
        Let $q \in \N$ and assume that $f : U \rightarrow \R$ satisfies \ref{assum:A1}. Then for every bounded set $V \subseteq U$ and every $\epsmax > 0$ with $\cl(V + \Bcl_{\epsmax}(0)) \subseteq U$, there is some $K \geq 0$ such that
        \begin{align*}
            \max_{z \in \Bcl_\eps(x)} |\calR^{q,W}(z)| \leq K \eps^{q+1}
        \end{align*}
        for all $x \in V$, $\eps \in [0,\epsmax]$, and finite, nonempty sets $W \subseteq \Bcl_\eps(x)$.
    \end{lemma}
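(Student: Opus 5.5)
The plan is to apply Taylor's theorem with the remainder \eqref{eq:Taylor_remainder} to each selection function $f_{s(y)}$, $y \in W$, and then compare the two maxima $f^W$ and $\T^{q,W}$ term by term.

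First fix the compact convex set on which the remainder is controlled. Set $C := \cl(\conv(V + \Bcl_{\epsmax}(0)))$. Since $V$ is bounded, $C$ is compact. We need $C \subseteq U$. Here we use that $U$ is convex: $\conv(V + \Bcl_{\epsmax}(0))$ lies in $U$, but its closure need not. To get around this, I would work directly with segments. For $x \in V$ and $y, z \in \Bcl_\eps(x)$, the whole segment $\conv(\{y,z\})$ lies in the convex ball $\Bcl_\eps(x) \subseteq \cl(V + \Bcl_{\epsmax}(0)) =: C'$. The set $C'$ is compact (closed and bounded) and contained in $U$ by hypothesis. So every point $a$ appearing in the remainder lies in $C'$, and convexity of $U$ is only needed so that Taylor's theorem applies along the segment, which lies in $U$.

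Second, bound the derivatives uniformly. By \ref{assum:A1}, the map $(s,a) \mapsto \partial_{i_1}\cdots\partial_{i_{q+1}} f_s(a)$ is continuous on $S \times U$. It is therefore bounded on the compact set $S \times C'$, say by $M$. Hence for every $s \in S$ and $a \in C'$,
\[
|\deriv^{q+1} f_s(a)(v)^{q+1}| \leq n^{q+1} M \|v\|_\infty^{q+1} \leq n^{q+1} M \|v\|^{q+1}.
\]
Set $K := n^{q+1} M/(q+1)!$. Then for each $y \in W$ and $z \in \Bcl_\eps(x)$, Taylor's theorem gives
\[
|f_{s(y)}(z) - T^q f_{s(y)}(z,y)| \leq K\|z-y\|^{q+1} \leq K (2\eps)^{q+1}.
\]
Replacing $K$ by $2^{q+1}K$ gives a bound of the form $K\eps^{q+1}$. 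The constant depends only on $V$, $\epsmax$, $q$, $n$ and the representation, not on $x$, $\eps$ or $W$.

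Finally, use the elementary inequality $|\max_i a_i - \max_i b_i| \leq \max_i |a_i - b_i|$ for finitely many reals. Applied with $a_y = f_{s(y)}(z)$ and $b_y = T^q f_{s(y)}(z,y)$, it gives $|\calR^{q,W}(z)| \leq K\eps^{q+1}$ for all $z \in \Bcl_\eps(x)$, which is the claim. The case $\eps = 0$ is included, since then $W = \{x\}$ and $\calR^{q,W}(x) = 0$.

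The only genuinely delicate point is the uniformity of the constant over the index set $S$. This is exactly where compactness of $S$ and joint continuity of the derivatives in $(s,x)$ from \ref{assum:A1} are used, together with ensuring the intermediate points stay in the compact set $C'$.
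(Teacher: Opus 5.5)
Your proof is correct and follows essentially the paper's argument: Taylor's theorem for each selection function, a uniform bound on the $(q+1)$-th derivatives from compactness of $S \times \cl(V + \Bcl_{\epsmax}(0))$, the factor $2^{q+1}$ from the diameter of $\Bcl_\eps(x)$, and a comparison of the two maxima. The paper does that last comparison by a case split on the sign of $\calR^{q,W}(z)$, which is equivalent to your inequality $|\max_i a_i - \max_i b_i| \leq \max_i |a_i - b_i|$, and your observation that $\conv(\{y,z\}) \subseteq \Bcl_\eps(x) \subseteq \cl(V + \Bcl_{\epsmax}(0))$ neatly replaces the paper's ``w.l.o.g.\ $V$ convex'' (your worry there is unfounded, though: the closure of the convex hull equals the convex hull of the compact set $\cl(V + \Bcl_{\epsmax}(0))$, which lies in the convex set $U$).
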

    \begin{proof}
        Assume w.l.o.g.\ that $V$ is convex. (Recall that $U$ is convex by assumption.) \\
        \textbf{Part 1:} Let $s \in S$ and $y \in V + \Bcl_{\epsmax}(0)$. Taylor's theorem (cf.\ Sec.\ \ref{sec:preliminaries}) applied to $f_s$ shows that for any $z \in V + \Bcl_{\epsmax}(0)$, there is some $a \in \conv(\{ y,z \}) \subseteq V + \Bcl_{\epsmax}(0)$ such that
        \begin{align*}
            f_s(z) - T^q f_s(z,y) = \frac{1}{(q+1)!} \deriv^{q+1} f_s(a)(z - y)^{q+1}.
        \end{align*}
        Continuity of partial derivatives of $f$ up to order $q+1$ with respect to $(s,x)$, compactness of $S$, and compactness of $\cl(V + \Bcl_{\epsmax}(0))$ imply that there is an upper bound for the derivative on the right-hand side of this inequality that does not depend on $s$ or $a$ (but on $V$, $\epsmax$, and $q$). More formally, there is some $K' \geq 0$ such that
        \begin{align} \label{eq:proof_lem_q_order_error_estimate_1}
            | f_s(z) - T^q f_s(z,y) | \leq \frac{K'}{(q+1)!} \| z - y \|^{q+1}
            \quad \forall y, z \in V + \Bcl_{\epsmax}(0), s \in S.
        \end{align} 
        \textbf{Part 2:} Let $x \in V$ and $\eps \in [0,\epsmax]$. Since $\| z - y \|^{q+1} \leq 2^{q+1} \eps^{q+1}$ for all $z, y \in \Bcl_\eps(x)$, \eqref{eq:proof_lem_q_order_error_estimate_1} shows that
        \begin{align} \label{eq:proof_lem_q_order_error_estimate_2}
            |f_s(y) - T^q f_s(z,y)| \leq K \eps^{q+1} \quad \forall z, y \in \Bcl_\eps(x), s \in S
        \end{align}
        for $K := (2^{q+1} K')/(q+1)!$. \\
        \textbf{Part 3:} Let $x \in V$, $\eps \in [0,\epsmax]$, and $z \in \Bcl_\eps(x)$. Let $y^1 \in W$ be such that $f^W(z) = f_{s(y^1)}(z)$ and $y^2 \in W$ be such that $\T^{q,W}(z) = T^q f_{s(y^2)}(z,y^2)$. If $\calR^{q,W}(z) < 0$, then \eqref{eq:proof_lem_q_order_error_estimate_2} and $f^W(z) \geq f_{s(y^2)}(z)$ imply that
        \begin{align*}
            |\calR^{q,W}(z)|
            &= -\calR^{q,W}(z)
            = \T^{q,W}(z) - f^W(z)
            = T^q f_{s(y^2)}(z,y^2) - f^W(z) \\
            &\leq T^q f_{s(y^2)}(z,y^2) - f_{s(y^2)}(z)
            \leq K \eps^{q+1}.
        \end{align*}
        If instead $\calR^{q,W}(z) \geq 0$, then \eqref{eq:proof_lem_q_order_error_estimate_2} and $\T^{q,W}(z) \geq T^q f_{s(y^1)}(z,y^1)$ imply that
        \begin{align*}
            |\calR^{q,W}(z)| 
            &= \calR^{q,W}(z) 
            = f^W(z) - \T^{q,W}(z)
            = f_{s(y^1)}(z) - \T^{q,W}(z) \\
            &\leq f_{s(y^1)}(z) - T^q f_{s(y^1)}(z,y^1)
            \leq K \eps^{q+1},
        \end{align*}
        completing the proof.
    \end{proof}

    The idea of our minimization algorithm is to approximate the minimum of $f$ by minimizing $\T^{q,W}$.
    Since $\T^{q,W}$ may be nonconvex and since the error estimate in Lem.\ \ref{lem:q_order_error_estimate} only holds on $\Bcl_\eps(x)$, we  constrain the minimization of $\T^{q,W}$ to $\Bcl_\eps(x)$. As such, our approach can be seen as a type of trust-region method. More formally, let $q \in \N$ and assume that $f : U \rightarrow \R$ satisfies \ref{assum:A1}. For $x \in U$, $\eps \geq 0$ with $\Bcl_\eps(x) \subseteq U$, and a nonempty, finite set $W \subseteq \Bcl_\eps(x)$, let
    \begin{align} \label{eq:def_bar_z}
        \bar{z}^{q,W}(x,\eps) \in \argmin_{z \in \Bcl_\eps(x)} \T^{q,W}(z), 
    \end{align}
    which is well-defined by continuity of $\T^{q,W}$. For the sake of brevity, we write $\bar{z}^W = \bar{z}^{q,W}(x,\eps)$ whenever the context allows. While the optimization problem on the right-hand side of \eqref{eq:def_bar_z} is again a nonsmooth problem, it has the following epigraph reformulation as a smooth, constrained problem:
    \begin{equation} \label{eq:bar_z_epigraph}
        \begin{aligned}
            \min_{z \in \R^n, \theta \in \R} \ & \theta \\
            \text{s.t.} \ & T^q f_{s(y)}(z,y) \leq \theta \quad \forall y \in W,\\
            & \| z - x \|^2 \leq \eps^2.
        \end{aligned}
    \end{equation}
    Note that for $q \geq 2$ this problem is neither quadratic nor convex. As such, it may be significantly more expensive to solve than the subproblems that appear in common bundle methods.

    Assume that $f$ has a minimum $x^*$ in $U$. In the following, we derive an upper bound for $\| \bar{z}^{q,W}(x,\eps) - x^* \|$ when $x \in \Bcl_\eps(x^*)$, i.e., when $x^*$ lies inside the trust region $\Bcl_\eps(x)$. By Lem.\ \ref{lem:q_order_error_estimate}, in the ideal case where $s(W) = S$, the model $\T^{q,W}$ approximates $f$ on $\Bcl_\eps(x)$ up to an error of $K \eps^{q+1}$. As such, if $f(x) - f(x^*) \leq K \eps^{q+1}$ for $x \in \Bcl_\eps(x^*)$, then the error bound in Lem.\ \ref{lem:q_order_error_estimate} cannot be used to show that the point $\bar{z}^{q,W}(x,\eps)$ is in any way more favorable than the original point $x$. To circumvent this issue, we have to make sure that $f$ does not become too ``flat'' around its minimum, which we do via the following growth assumption:
    \begin{assum} \label{assum:A2}
        The function $f : U \rightarrow \R$ satisfies \ref{assum:A1} for $q \in \N$.
        Furthermore, for $p \in \N$ and $x^* \in U$, there is some $\beta > 0$ such that
        \begin{align*}
            f(x) \geq f(x^*) + \beta \| x - x^* \|^p
        \end{align*}
        for all $x \in U$. The value $p$ is referred to as the \emph{order of growth} of $f$ around $x^*$.
    \end{assum}

    Note that \ref{assum:A2} implies that $x^*$ is the unique global minimum of $f$ in $U$, and that for bounded $U$, a minimum of order $p$ is also a minimum of any order $p' \geq p$. (Typically, growth assumptions only have to hold on an open neighborhood of a point. However, since $U$ can be thought of a small open neighborhood of the minimum in all our local convergence results, the global growth on $U$ in \ref{assum:A2} is no practical restriction.) If $s(W) = S$ and $q \geq p$, then \ref{assum:A2} avoids the issues discussed above. For $s(W) \neq S$, an additional bound for $f(\bar{z}^W) - \T^{q,W}(\bar{z}^W)$ has to be assumed. In general, we obtain the following lemma:
    \begin{lemma} \label{lem:T_W_error_estimate}
        Assume that $f : U \rightarrow \R$ satisfies \ref{assum:A2} for $q, p \in \N$. Denote $\bar{z}^W = \bar{z}^{q,W}(x,\eps)$. Then for every $\epsmax > 0$ with $\Bcl_{2\epsmax}(x^*) \subseteq U$ there is some $K \geq 0$ such that for every $\eps \in [0,\epsmax]$, $x \in \Bcl_\eps(x^*)$, and finite, nonempty set $W \subseteq \Bcl_\eps(x)$, it holds
        \begin{equation} \label{eq:T_W_error_estimate_1}
            \begin{aligned}
                \beta \| \bar{z}^W - x^* \|^p
                &\leq f(\bar{z}^W) - \T^{q,W}(\bar{z}^W) + K \eps^{q+1} \\
                &\leq f(\bar{z}^W) - f^W(\bar{z}^W) + 2K \eps^{q+1},
            \end{aligned}
        \end{equation}
        In particular:
        \begin{enumerate}[label=(\alph*)]
            \item If $A(\bar{z}^W) \cap s(W) \neq \emptyset$ then
            \begin{align*}
                \| \bar{z}^W - x^* \| \leq \left( \frac{2K}{\beta} \right)^{1/p} \eps^{\frac{q+1}{p}}.
            \end{align*}
            \item If $\sigma \in (0,1]$ and
            \begin{align} \label{eq:T_W_error_estimate_2}
                f(\bar{z}^W) - \T^{q,W}(\bar{z}^W) \leq \eps^{q+\sigma},
            \end{align}
            then
            \begin{align} \label{eq:T_W_error_estimate_3}
                \| \bar{z}^W - x^* \| \leq \left( \frac{1 + K \eps^{1 - \sigma}}{\beta} \right)^{1/p} \eps^{\frac{q+\sigma}{p}}.
            \end{align}
        \end{enumerate}
    \end{lemma}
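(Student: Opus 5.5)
We obtain the statement by combining the growth assumption \ref{assum:A2} with the optimality of $\bar{z}^W$ for the model on $\Bcl_\eps(x)$, and then applying Lem.\ \ref{lem:q_order_error_estimate} twice. First fix the constant $K$. Take $V := \Bcl_{\epsmax}(x^*)$, which is bounded and convex. Then $\cl(V + \Bcl_{\epsmax}(0)) = \Bcl_{2\epsmax}(x^*) \subseteq U$, so Lem.\ \ref{lem:q_order_error_estimate} gives some $K \geq 0$ such that $|\calR^{q,W}(z)| \leq K\eps^{q+1}$ for all $z \in \Bcl_\eps(x)$. This holds whenever $x \in V$, $\eps \in [0,\epsmax]$ and $W \subseteq \Bcl_\eps(x)$ is finite and nonempty. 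If $x \in \Bcl_\eps(x^*)$ with $\eps \leq \epsmax$, then $x \in V$, so this $K$ is independent of $\eps$, $x$ and $W$.

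For the first inequality in \eqref{eq:T_W_error_estimate_1}, note that $x \in \Bcl_\eps(x^*)$ means $x^* \in \Bcl_\eps(x)$. So $x^*$ is feasible for \eqref{eq:def_bar_z}, and $\T^{q,W}(\bar{z}^W) \leq \T^{q,W}(x^*)$. Next, $f^W \leq f$ everywhere because $s(W) \subseteq S$. Combining this with the error bound at $x^*$ gives
\[
\T^{q,W}(x^*) = f^W(x^*) - \calR^{q,W}(x^*) \leq f(x^*) + K\eps^{q+1}.
\]
Since $\bar{z}^W \in \Bcl_\eps(x) \subseteq \Bcl_{2\epsmax}(x^*) \subseteq U$, \ref{assum:A2} applies at $\bar{z}^W$. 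Hence
\[
\beta\|\bar{z}^W - x^*\|^p \leq f(\bar{z}^W) - f(x^*) \leq f(\bar{z}^W) - \T^{q,W}(\bar{z}^W) + K\eps^{q+1}.
\]
The second inequality follows by writing $-\T^{q,W}(\bar{z}^W) = -f^W(\bar{z}^W) + \calR^{q,W}(\bar{z}^W)$ and bounding $\calR^{q,W}(\bar{z}^W)$ by $K\eps^{q+1}$ with Lem.\ \ref{lem:q_order_error_estimate} again.

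For (a), take $s \in A(\bar{z}^W) \cap s(W)$. Then $f(\bar{z}^W) = f_s(\bar{z}^W) \leq f^W(\bar{z}^W)$, and together with $f^W \leq f$ this gives $f(\bar{z}^W) = f^W(\bar{z}^W)$. The second line of \eqref{eq:T_W_error_estimate_1} then reduces to $\beta\|\bar{z}^W - x^*\|^p \leq 2K\eps^{q+1}$, and taking $p$-th roots gives the claim.

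For (b), substitute \eqref{eq:T_W_error_estimate_2} into the first line of \eqref{eq:T_W_error_estimate_1} to get $\beta\|\bar{z}^W - x^*\|^p \leq \eps^{q+\sigma} + K\eps^{q+1} = (1 + K\eps^{1-\sigma})\eps^{q+\sigma}$. Dividing by $\beta$ and taking $p$-th roots yields \eqref{eq:T_W_error_estimate_3}. The case $\eps = 0$ is trivial: then $W = \{x\}$ and $\bar{z}^W = x = x^*$.

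The argument is direct. The only points needing care are choosing $V$ so that the hypothesis of Lem.\ \ref{lem:q_order_error_estimate} follows from $\Bcl_{2\epsmax}(x^*) \subseteq U$, and checking that both $x^*$ and $\bar{z}^W$ lie in $\Bcl_\eps(x)$, so the error bound applies at both points.
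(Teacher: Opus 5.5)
Your proposal is correct and follows essentially the same route as the paper: you fix $K$ via Lem.\ \ref{lem:q_order_error_estimate} with $V = \Bcl_{\epsmax}(x^*)$, combine $f \geq f^W$, the feasibility of $x^*$ in $\Bcl_\eps(x)$ and the growth assumption to get the first inequality, and bound $\calR^{q,W}(\bar{z}^W)$ for the second. Parts (a) and (b) then follow exactly as in the paper's proof.
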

    \begin{proof}
        Let $\epsmax > 0$ so that $\Bcl_{2 \epsmax}(x^*) \subseteq U$. Let $\eps \in [0,\epsmax]$, $x \in \Bcl_\eps(x^*)$, and $W \subseteq \Bcl_\eps(x)$ be finite and nonempty. Then $\Bcl_\eps(x) \subseteq \Bcl_{2 \epsmax}(x^*) \subseteq U$ and
        \begin{align} \label{eq:proof_lem_T_W_error_estimate}
            f(x^*) 
            \geq f^W(x^*) 
            = \T^{q,W}(x^*) + \calR^{q,W}(x^*) 
            \geq \T^{q,W}(\bar{z}^W) + \calR^{q,W}(x^*).
        \end{align}
        Since $\bar{z}^W \in \Bcl_\eps(x) \subseteq U$, the growth assumption \ref{assum:A2} yields
        \begin{align*}
            \beta \| \bar{z}^W - x^* \|^p
            \leq f(\bar{z}^W) - f(x^*)
            \leq f(\bar{z}^W) - \T^{q,W}(\bar{z}^W) - \calR^{q,W}(x^*).
        \end{align*}
        Applying Lem.\ \ref{lem:q_order_error_estimate} to $f^W$ (for $V = \Bcl_{\epsmax}(x^*)$) yields some $K \geq 0$ (which does not depend on $x$, $\eps$, or $W$) such that
        \begin{align*}
            \beta \| \bar{z}^W - x^* \|^p
            \leq f(\bar{z}^W) - \T^{q,W}(\bar{z}^W) + K \eps^{q+1},
        \end{align*}
        which shows the first inequality in \eqref{eq:T_W_error_estimate_1}. The second inequality follows by
        \begin{align*}
            f(\bar{z}^W) - \T^{q,W}(\bar{z}^W) + K \eps^{q+1}
            &= f(\bar{z}^W) - f^W(\bar{z}^W) + \calR^{q,W}(\bar{z}^W) + K \eps^{q+1} \\
            &\leq f(\bar{z}^W) - f^W(\bar{z}^W) + 2K \eps^{q+1}.
        \end{align*}
        If $A(\bar{z}^W) \cap s(W) \neq \emptyset$ then $f(\bar{z}^W) - f^W(\bar{z}^W) = 0$, so $\beta \| \bar{z}^W - x^* \|^p \leq 2K \eps^{q+1}$, which is equivalent to the estimate in (a). If $f(\bar{z}^W) - \T^{q,W}(\bar{z}^W) \leq \eps^{q+\sigma}$ then
        \begin{align*}
            \beta \| \bar{z}^W - x^* \|^p \leq \eps^{q + \sigma} + K \eps^{q+1} = (1 + K \eps^{1 - \sigma}) \eps^{q + \sigma},
        \end{align*}
        which is equivalent to the estimate in (b), completing the proof.
    \end{proof}

    Lem.\ \ref{lem:T_W_error_estimate} shows that if we know that the distance of $x$ to the minimum $x^*$ is at most $\eps$ and one of the prerequisites of (a) or (b) holds, then the distance $\| \bar{z}^W - x^* \|$ is at most $M \eps^{(q+\sigma)/p}$ for $\sigma \in (0,1]$ and some $M > 0$ which does not depend on $x$, $\eps$, or $W$ (since $\eps \leq \epsmax$ in the first factor on the right-hand side of \eqref{eq:T_W_error_estimate_3}). In other words, $\| \bar{z}^W - x^* \|$ and the estimate $\eps$ for $\| x - x^* \|$ differ by a factor of $M \eps^{((q+\sigma)/p) - 1}$. If $q \geq p$ and $\eps$ is small enough, then this factor is less than $1$, such that the distance $\| \bar{z}^W - x^* \|$ is less than the estimate for the distance $\| x - x^* \|$. Starting with some $x^1 \in U$ and $\eps_1 > 0$ such that $x^* \in \Bcl_{\eps_1}(x^1)$, this motivates a method for approximating $x^*$ by iterating $x^{j+1} = \bar{z}^{q,W_j}(x^j,\eps_j)$ for suitable sequences $(W_j)_j$ and $(\eps_j)_j$ with $\eps_j \rightarrow 0$ and $\| \bar{z}^{q,W_j}(x^j,\eps_j) - x^* \| \leq \eps_{j+1}$ for all $j \in \N$. Since the factor $M \eps^{((q+\sigma)/p) - 1}$ decreases when $\eps$ decreases, $(\eps_j)_j$ can be chosen as Q-superlinearly vanishing, such that $(x^j)_j$ converges R-superlinearly to $x^*$. To obtain an implementable method from this idea, there are three challenges that have to be overcome:
    \begin{enumerate}[leftmargin=1.1cm,label=(C\arabic*)]
        \item \label{enum:C1} Every iteration requires a compact set $W_j \subseteq \Bcl_{\eps_j}(x^j)$ such that the prerequisites of (a) or (b) in Lem.\ \ref{lem:T_W_error_estimate} are satisfied. The prerequisite of (a) is satisfied trivially if $S$ is finite and $s(W_j) = S$, i.e., if for each $s' \in S$, $W_j$ contains a point $y$ with $s(y) = s'$. However, recall that our oracle does not give us access to any information about the active indices of $f$, which makes it impossible to work with prerequisite of (a) explicitly. Fortunately, no knowledge about active indices is required when working with the prerequisite of (b), and the left-hand side of \eqref{eq:T_W_error_estimate_2} can be evaluated in practice.
        \item \label{enum:C2} A vanishing sequence $(\eps_j)_j$ with Q-superlinear convergence has to be found such that $\| \bar{z}^{q,W_j}(x^j,\eps_j) - x^* \| = M \eps_j^{(q + \sigma)/p} \leq \eps_{j+1}$ for all $j \in \N$. In theory, we could simply define $\eps_{j+1}$ as $M \eps_j^{(q + \sigma)/p}$. However, since the constant $M$ depends on $K$ (from Lem.\ \ref{lem:q_order_error_estimate}) and $\beta$ (from \ref{assum:A2}), and since we do not assume that these two constants are known, we cannot do this in practice. Instead, $\eps_{j+1}$ has to be an upper bound that is tight enough for $(\eps_j)_j$ to be Q-superlinearly vanishing.
        \item \label{enum:C3} The method requires an initial point $x^1$ that is already close enough to the minimum $x^*$. Additionally, a sufficiently small $\eps_1$ with $x^* \in \Bcl_{\eps_1}(x^1)$ has to be known.
    \end{enumerate}
    We will refer to the items in the above list as Challenges \ref{enum:C1}, \ref{enum:C2}, and \ref{enum:C3}. In the following section, we show how \ref{enum:C1} and \ref{enum:C2} can be overcome to obtain an implementable, locally convergent method. Challenge \ref{enum:C3} is concerned with the globalization of the local method. We only give a summary of how this can be achieved in Sec.\ \ref{sec:globalization} and refer to the accompanying paper \cite{GU2026b} for the details.

\section{Trust-region bundle method with R-superlinear convergence} \label{sec:local_superlinear_method}

    In this section, we turn the theoretical method described at the end of the previous section into a practical method with local R-superlinear convergence by resolving the Challenges \ref{enum:C1} and \ref{enum:C2}. First of all, for \ref{enum:C1}, we present a subroutine that computes a set $W$ satisfying the prerequisite \eqref{eq:T_W_error_estimate_2} of Lem.\ \ref{lem:T_W_error_estimate}(b) by iteratively solving \eqref{eq:bar_z_epigraph}. Afterwards, for \ref{enum:C2}, we construct an explicit sequence $(\eps_j)_j$ that has the required properties if the initial $\eps_1$ is small enough.

    To this end, let $x \in U$, $\eps \geq 0$ with $\Bcl_\eps(x) \subseteq U$, and let $W \subseteq \Bcl_{\eps}(x)$ be finite and nonempty. By definition of $\T^{q,W}$ (cf.\ \eqref{eq:def_model}), for all $y \in W$ we have
    \begin{align*}
        \T^{q,W}(y) \geq T^q f_{s(y)}(y,y) = f(y),
    \end{align*}
    so 
    \begin{align*}
        f(y) - \T^{q,W}(y) \leq 0 \leq \eps^{q+\sigma} \quad \forall y \in W.
    \end{align*}
    In particular, if \eqref{eq:T_W_error_estimate_2} is violated, then $\bar{z}^W \notin W$. Thus, adding $\bar{z}^W$ to $W$ leads to an augmented model. This is the motivation for \algApproxW{}.
    \begin{algorithm} 
        \caption{Compute $W$ satisfying \eqref{eq:T_W_error_estimate_2}}
        \label{algo:approx_W}
        \begin{algorithmic}[1] 
            \Require Oracle \ref{oracle:1}, point $x \in U$, radius $\eps > 0$ with $\Bcl_\eps(x) \subseteq U$, $q \in \N$, finite, nonempty set $W^1 \subseteq \Bcl_\eps(x)$, $\sigma \in (0,1)$.
            \For{$i = 1, 2, \dots$}
                \State Compute $\bar{z}^{W^i} = \bar{z}^{q,W^i}(x,\eps)$ (cf.\ \eqref{eq:def_bar_z}, \eqref{eq:bar_z_epigraph}). \label{state:approx_W_solve_subproblem}
                \If{$f(\bar{z}^{W^i}) - \T^{q,W^i}(\bar{z}^{W^i}) \leq \eps^{q + \sigma}$} \label{state:approx_W_stopping_criterion}
                    \State Stop.
                \Else
                    \State Set $W^{i+1} = W^i \cup \{ \bar{z}^{W^i} \}$.
                \EndIf
            \EndFor
        \end{algorithmic}
    \end{algorithm}	
    The following lemma shows that this algorithm always terminates, which means that a set satisfying \eqref{eq:T_W_error_estimate_2} for a given $\sigma \in (0,1)$ is found:
    
    \begin{lemma} \label{lem:algo_approx_W_termination}
        Let $q \in \N$ and assume that $f : U \rightarrow \R$ satisfies \ref{assum:A1}. Let $x \in U$.
        \begin{enumerate}[label=(\alph*)]
            \item Let $\eps > 0$ with $\Bcl_\eps(x) \subseteq U$. Then \algApproxW{} terminates.
            \item If $S$ is finite then there is some $\epsmax > 0$ such that for all $\eps \in (0,\epsmax]$, it holds $\Bcl_\eps(x) \subseteq U$ and \algApproxW{} terminates in at most $|S|$ iterations.
        \end{enumerate}
    \end{lemma}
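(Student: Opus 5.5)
For part (a), I would argue by contradiction. Suppose \algApproxW{} never stops, and write $z^i := \bar{z}^{W^i}$. Then for all $i$ we have
\begin{align*}
    f(z^i) - \T^{q,W^i}(z^i) > \eps^{q+\sigma},
\end{align*}
and $z^i \in W^{k}$ for all $k > i$.

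The first step is a uniform Lipschitz bound. Since $\Bcl_\eps(x) \subseteq U$ is compact, I would use the continuity of $(s,y) \mapsto \deriv^m f_s(y)$ for $m \leq q$, together with compactness of $S \times \Bcl_\eps(x)$, to bound all derivatives up to order $q$ uniformly. This shows there is a constant $L$, independent of $s$ and $y$, such that every Taylor polynomial $z \mapsto T^q f_s(z,y)$ with $s \in S$ and $y \in \Bcl_\eps(x)$ is $L$-Lipschitz on $\Bcl_\eps(x)$. I would also use that $f$ is \lc{1}, hence Lipschitz on this compact ball with some constant $L_f$.

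Now take $k > i$. Because $z^i \in W^k$ and $T^q f_{s(z^i)}(z^i,z^i) = f(z^i)$, we get
\begin{align*}
    \T^{q,W^k}(z^k) \geq T^q f_{s(z^i)}(z^k,z^i) \geq f(z^i) - L\|z^k - z^i\|.
\end{align*}
Combining this with $f(z^k) \leq f(z^i) + L_f \|z^k - z^i\|$ gives
\begin{align*}
    \eps^{q+\sigma} < f(z^k) - \T^{q,W^k}(z^k) \leq (L + L_f)\|z^k - z^i\|.
\end{align*}
The sequence $(z^i)_i$ lies in a compact ball, so it has a Cauchy subsequence. Choosing $i<k$ from that subsequence with $\|z^k - z^i\|$ small gives a contradiction. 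The main care needed in this part is getting the Lipschitz constant $L$ uniform over the index $s$; this is exactly what the joint continuity and compactness in \ref{assum:A1} provide.

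For part (b), I would first fix $\epsmax$ small enough that $\Bcl_{3\epsmax}(x) \subseteq U$. I would then apply Lem.\ \ref{lem:q_order_error_estimate} with $V$ the open ball of radius $\epsmax$ around $x$, so that $\cl(V + \Bcl_{\epsmax}(0)) \subseteq U$. This yields a constant $K$ with $|\calR^{q,W}| \leq K \eps^{q+1}$ on $\Bcl_\eps(x)$, uniformly in $\eps \leq \epsmax$ and in $W$. Since $\sigma < 1$, I would shrink $\epsmax$ further so that $K \eps^{1-\sigma} \leq 1$, that is, $K\eps^{q+1} \leq \eps^{q+\sigma}$ for all $\eps \leq \epsmax$.

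The key claim is that whenever the stopping test fails at iteration $i$, we must have $s(z^i) \notin s(W^i)$. Suppose instead that $s(z^i) \in s(W^i)$. Since $s(z^i) \in A(z^i)$, this gives $A(z^i) \cap s(W^i) \neq \emptyset$, hence $f(z^i) = f^{W^i}(z^i)$. Then
\begin{align*}
    f(z^i) - \T^{q,W^i}(z^i) = \calR^{q,W^i}(z^i) \leq K\eps^{q+1} \leq \eps^{q+\sigma},
\end{align*}
so the test would have succeeded.

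It follows that every failed test strictly increases $|s(W^i)|$. Since $W^1$ is nonempty, $|s(W^1)| \geq 1$, and $|s(W^i)| \leq |S|$ throughout. Hence at most $|S|-1$ failures can occur, and the algorithm terminates within $|S|$ iterations.
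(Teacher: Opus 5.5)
Your proposal is correct and follows essentially the same route as the paper. For (a), it uses the same compactness/contradiction argument exploiting that earlier centers $z^i$ lie in later bundles; you simply make the paper's continuity argument quantitative through uniform Lipschitz constants. For (b), it uses the same counting argument on $|s(W^i)|$ combined with the remainder bound of Lem.~\ref{lem:q_order_error_estimate} and $\sigma<1$, and your choice $\Bcl_{3\epsmax}(x)\subseteq U$ actually meets the hypothesis $\cl(V+\Bcl_{\epsmax}(0))\subseteq U$ of that lemma more carefully than the paper, which only assumes $\Bcl_{\epsmax}(x)\subseteq U$.
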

    \begin{proof}
        \textbf{(a)} Assume that \algApproxW{} does not terminate, i.e., that the inequality in Step \ref{state:approx_W_stopping_criterion} is violated for all $i \in \N$. Then $(\bar{z}^{W^i})_i \subseteq \Bcl_\eps(x)$ is an infinite sequence, which, by compactness of $\Bcl_\eps(x)$, has an accumulation point.
        Let $(\hat{z}^l)_l$ be a converging subsequence of $(\bar{z}^{W^i})_i$ and let $(\hat{W}^l)_l$ be the corresponding subsequence of $(W^i)_i$. Then by definition of $\T^{q,W}$ (cf.\ \eqref{eq:def_model}) and since $\hat{z}^{l-1} \in \hat{W}^l$, it holds
        \begin{equation*}
            \begin{aligned}
                f(\hat{z}^l) - \T^{q,\hat{W}^l}(\hat{z}^l)
                &\leq f(\hat{z}^l) - T^q f_{s(\hat{z}^{l-1})}(\hat{z}^l,\hat{z}^{l-1}) \\
                &= f(\hat{z}^l) - \sum_{m = 0}^q \frac{1}{m!} \deriv^{m} f_{s(\hat{z}^{l-1})}(\hat{z}^{l-1})(\hat{z}^l - \hat{z}^{l-1})^m \\
                &= f(\hat{z}^l) - f(\hat{z}^{l-1})  + \sum_{m = 1}^q \frac{1}{m!} \deriv^{m} f_{s(\hat{z}^{l-1})}(\hat{z}^{l-1})(\hat{z}^l - \hat{z}^{l-1})^m.
            \end{aligned}
        \end{equation*}
        By continuity of $f$ and $(s,x) \mapsto \deriv^{m} f_s(x)$, and by compactness of $S$, the right-hand of this inequality vanishes for $l \rightarrow \infty$. In particular, the inequality in Step \ref{state:approx_W_stopping_criterion} has to hold after finitely many iterations, which is a contradiction. \\
        \textbf{(b)} If $f(\bar{z}^{W^i}) - f^{W^i}(\bar{z}^{W^i}) > 0$ holds in iteration $i$ of \algApproxW{}, then $s(\bar{z}^{W^i}) \notin s(W^i)$. By construction of the algorithm, this means that $|s(W^{i+1})| = |s(W^i)| + 1$. Since $s(W^i) \subseteq S$ and $|s(W^1)| = |W^1| \geq 1$, this can only happen in at most $|S| - 1$ iterations. In particular, there must be some $i \in \{ 1, \dots, |S| \}$ with $f(\bar{z}^{W^i}) - f^{W^i}(\bar{z}^{W^i}) = 0$.
        Let $\epsmax > 0$ so that $\Bcl_{\epsmax}(x) \subseteq U$. Let $\eps \in (0,\epsmax]$. Then $\Bcl_\eps(x) \subseteq U$ and Lem.\ \ref{lem:q_order_error_estimate} (with $V = \Bcl_{\epsmax}(x)$) implies that there is some $K \geq 0$ (which does not depend on $\eps$ or $W^i$) such that
        \begin{align*}
            0
            &= f(\bar{z}^{W^i}) - f^{W^i}(\bar{z}^{W^i})
            = f(\bar{z}^{W^i}) - \T^{q,W^i}(\bar{z}^{W^i}) - \calR^{q,W^i}(\bar{z}^{W^i}) \\
            &\geq f(\bar{z}^{W^i}) - \T^{q,W^i}(\bar{z}^{W^i}) - K \eps^{q+1},
        \end{align*}
        so
        \begin{align*}
            f(\bar{z}^{W^i}) - \T^{q,W^i}(\bar{z}^{W^i}) \leq K \eps^{q+1}.
        \end{align*}
        Assuming w.l.o.g.\ that $\epsmax < (1/K)^{1/(1-\sigma)}$ implies $K \eps^{q+1} < \eps^{q + \sigma}$, causing the algorithm to stop in Step \ref{state:approx_W_stopping_criterion} in iteration $i \leq |S|$. (If $K = 0$ then this follows trivially.)
    \end{proof}
    
    We discuss further properties of \algApproxW{} in the following remark:
    \begin{remark} \label{rem:bundle_initialization}
        \begin{enumerate}[label=(\alph*)]
            \item A simple choice for the initial $W^1$ is $W^1 = \{ x \}$. However, if \algApproxW{} is used as a subroutine in a larger algorithm, then points from $\Bcl_\eps(x)$ in which the oracle was already evaluated can be included in $W^1$. In this way, a bundle-like behavior with a memory of oracle information can be induced. Alternatively, one could randomly sample points from $\Bcl_\eps(x)$ for the initial $W^1$ as in random gradient sampling \cite{BLO2005,BCL2020}.
            \item Since the upper bound $|S|$ in Lem.\ \ref{lem:algo_approx_W_termination}(b) may be large, the hope is that in practice, \algApproxW{} terminates in far fewer iterations. However, unfortunately, numerical experiments will later suggest that this bound is sharp (cf.\ Ex.\ \ref{example:max_root}, where $|S| = 2n$).
        \end{enumerate}
    \end{remark}

    \algApproxW{} resolves Challenge \ref{enum:C1}, since it allows us to compute finite sets $W$ for which the estimate \eqref{eq:T_W_error_estimate_3} in Lem.\ \ref{lem:T_W_error_estimate}(b) holds. To resolve Challenge \ref{enum:C2}, we have to find an upper bound for this estimate that we can actually compute in practice and that is tight enough to obtain fast convergence. To this end, for $\eps_1 > 0$ and $\sigma, \kappa \in (0,1)$, consider the sequence $(\eps_j)_j$ defined by
    \begin{align} \label{eq:def_eps_j}
        \eps_j := \eps_1 \kappa^{(\frac{q+\sigma}{p})^{j-1} - 1} \quad \forall j \in \N.
    \end{align}
    The following, purely arithmetic lemma shows that if $q \geq p$, then for any choice of $\sigma$ and $\kappa$, this sequence has the desired properties if $\eps_1$ is small enough:

    \begin{lemma} \label{lem:eps_estimate}
        Let $q,p \in \N$ with $q \geq p$, $\eps_1 > 0$, and $\sigma, \kappa \in (0,1)$. Let $(\eps_j)_j$ be defined as in \eqref{eq:def_eps_j}.
        \begin{enumerate}[label=(\alph*)]
            \item The sequence $(\eps_j)_j$ monotonically decreases and vanishes Q-superlinearly with order $(q+\sigma)/p$.
            \item Let $M > 0$. Then there is some $\epsmax > 0$ such that for all $\eps_1 \in (0,\epsmax]$, it holds
            \begin{align*}
                M \eps_j^{\frac{q+\sigma}{p}} < \eps_{j+1}
                \quad \forall j \in \N
                \quad \text{and} \quad
                M \eps_{j-1}^{\frac{q+\sigma}{p}} + M \eps_j^{\frac{q+\sigma}{p}} 
                < \eps_j \quad \forall j \geq 2.
            \end{align*}
        \end{enumerate}
    \end{lemma}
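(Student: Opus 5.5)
Write $r := (q+\sigma)/p$. Since $q \geq p$ and $\sigma > 0$ we have $r > 1$. The plan is to rewrite \eqref{eq:def_eps_j} in a self-similar form and then reduce both claims of (b) to inequalities between powers of $\eps_1$ and $\kappa$.

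\textbf{Part (a).} From $\eps_j = \eps_1 \kappa^{r^{j-1}-1}$ we get $\eps_j/\eps_1 = \kappa^{r^{j-1}-1}$. Using $r^{j} - 1 = r(r^{j-1}-1) + (r-1)$, this gives the recursion
\begin{align*}
    \eps_{j+1} = \eps_1 \kappa^{r-1} \bigl( \eps_j / \eps_1 \bigr)^{r} = c\, \eps_j^{r}, \quad c := \eps_1^{1-r} \kappa^{r-1}.
\end{align*}
Monotone decrease follows because $r^{j-1}-1$ is strictly increasing in $j$ and $\kappa \in (0,1)$. This also gives $\eps_j \to 0$. The recursion then yields $\eps_{j+1}/\eps_j^{r} = c$, a constant, which is exactly Q-convergence of order $r$ (and in particular Q-superlinear, since $\eps_{j+1}/\eps_j = c\,\eps_j^{r-1} \to 0$).

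\textbf{Part (b), first inequality.} By the recursion, $M \eps_j^{r} < \eps_{j+1}$ is equivalent to $M < c = (\kappa/\eps_1)^{r-1}$. Since $r - 1 > 0$, this holds for every $j$ as soon as $\eps_1 < \kappa M^{-1/(r-1)}$.

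\textbf{Part (b), second inequality.} Divide by $\eps_j$ and use the recursion twice. We have $\eps_j^{r-1} = \eps_{j+1}/(c\,\eps_j)$ and $\eps_{j-1}^{r} = \eps_j/c$. Hence the left-hand side divided by $\eps_j$ equals
\begin{align*}
    \frac{M}{c} + M \eps_j^{r-1}.
\end{align*}
Since $(\eps_j)_j$ is decreasing, $M \eps_j^{r-1} \leq M \eps_1^{r-1}$. It therefore suffices to ensure
\begin{align*}
    M \eps_1^{r-1} \kappa^{1-r} + M \eps_1^{r-1} < 1,
\end{align*}
that is, $\eps_1^{r-1} < \bigl( M(\kappa^{1-r}+1) \bigr)^{-1}$. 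This holds for small $\eps_1$ because $r > 1$. Taking $\epsmax$ as the minimum of the two thresholds (slightly reduced so that all the inequalities are strict) completes the argument.

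The only delicate point is keeping track of the dependence of $c$ on $\eps_1$: $c$ blows up as $\eps_1 \to 0$, which is precisely what makes the required bounds achievable. Otherwise the lemma is purely arithmetic, and I expect no real obstacle.
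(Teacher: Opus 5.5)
Your proof is correct and follows essentially the paper's route. The paper also rests on the identity $M\eps_j^{Q} = M\eps_1^{Q-1}\kappa^{1-Q}\eps_{j+1}$, which is your recursion $\eps_{j+1} = c\,\eps_j^{r}$, together with monotonicity to control $M\eps_j^{Q-1}$. The only cosmetic difference is that the paper first secures $M\eps_j^{Q} < \frac{1}{2}\eps_{j+1}$ and then absorbs the remaining term, whereas you write down explicit thresholds for $\eps_1$.
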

    \begin{proof}
        For ease of notation let $Q := \frac{q + \sigma}{p}$, so $\eps_j = \eps_1 \kappa^{Q^{j-1} - 1}$. Since $q \geq p$ and $\sigma \in (0,1)$ it holds $Q > 1$. \\
        \textbf{(a)} Let $j \in \N$ and $a > 0$. Then
        \begin{align*}
            \frac{\eps_{j+1}}{\eps_j^a}
            &= \frac{\eps_1 \kappa^{Q^j - 1}}{\eps_1^a \kappa^{a ( Q^{j-1} - 1)}}
            = \eps_1^{1 - a} \kappa^{Q^j - 1 - a ( Q^{j-1} - 1)} \\
            &= \eps_1^{1 - a} \kappa^{a-1} \kappa^{Q^j - a Q^{j-1}}
            = \eps_1^{1 - a} \kappa^{a-1} \kappa^{Q^{j-1} (Q - a)}.
        \end{align*}
        For $a = 1$, the right-hand side of this equation is less than $1$ for all $j \in \N$ and vanishes for $j \rightarrow \infty$, since $\kappa \in (0,1)$ and $Q > 1$. Thus, $(\eps_j)_j$ is monotonically decreasing and vanishes Q-superlinearly. Furthermore, for $a = Q = (q+\sigma)/p$, the right-hand side does not depend on $j$ and is therefore bounded, such that $(\eps_j)_j$ converges with order $(q+\sigma)/p$. \\
        \textbf{(b)} It holds
        \begin{align*}
            M \eps_j^{Q}
            &= M \eps_1^{Q} \kappa^{Q (Q^{j-1} - 1)}
            = M \eps_1^{Q} \kappa^{-Q} \kappa^{Q^j} \\
            &= M \eps_1^{Q - 1} \kappa^{-Q+1} \eps_1 \kappa^{Q^j - 1}
            = M \eps_1^{Q - 1} \kappa^{-Q+1} \eps_{j+1}
            \quad \forall j \in \N.
        \end{align*}
        Since $Q > 1$, we can choose $\epsmax$ small enough so that for all $\eps_1 \in (0,\epsmax]$, it holds
        \begin{align} \label{eq:proof_lem_eps_estimate_1}
            M \eps_j^{Q} < \frac{1}{2} \eps_{j+1} \quad \forall j \in \N.
        \end{align}
        In particular, the first inequality in \eqref{lem:eps_estimate} holds. For the second inequality, for $\eps_1 \in (0,\epsmax]$, we have
        \begin{align*}
            M \eps_{j-1}^{Q} + M \eps_j^{Q} 
            \stackrel{\eqref{eq:proof_lem_eps_estimate_1}}{<} \frac{1}{2} \eps_j + M \eps_j^{Q} 
            = \left( \frac{1}{2} + M \eps_j^{Q - 1} \right) \eps_j
            \quad \forall j \geq 2.
        \end{align*}
        Since $(\eps_j)_j$ is monotonically decreasing, this shows that we can choose $\epsmax$ small enough so that for all $\eps_1 \in (0,\epsmax]$, the second inequality in \eqref{lem:eps_estimate} holds.
    \end{proof}

    The previous lemma resolves Challenge \ref{enum:C2}, and the resulting method is \algLocal{}.
    \begin{algorithm} 
        \caption{Local superlinear method}
        \label{algo:local_method}
        \begin{algorithmic}[1] 
            \Require Oracle \ref{oracle:1}, initial point $x^1 \in U$, initial radius $\eps_1 > 0$, model order $q \in \N$, growth order $p \in \N$, parameters $\sigma, \kappa \in (0,1)$.
            \For{$j = 1, 2, \dots$}
                \State Set $\eps_j = \eps_1 \kappa^{(\frac{q + \sigma}{p})^{j-1} - 1}$ (cf.\ \eqref{eq:def_eps_j}).
                \State Compute $W_j \subseteq \Bcl_{\eps_j}(x^j)$ via \algApproxW{} (with initialization $W^1 = \{ x^j \}$). \label{state:local_method_approx_W}
                \State Set $x^{j+1} = \bar{z}^{q,W_j}(x^j,\eps_j)$ (cf.\ \eqref{eq:def_bar_z}, \eqref{eq:bar_z_epigraph}). \label{state:local_method_new_iterate}
            \EndFor
        \end{algorithmic}
    \end{algorithm}
    By construction of \algApproxW{}, $\bar{z}^{q,W_j}(x^j,\eps_j)$ in Step \ref{state:local_method_new_iterate} was already computed in Step \ref{state:local_method_approx_W}. While knowledge about the order of growth $p$ is required, it suffices if $p$ is an upper estimate for the actual order when considering the local convergence (cf.\ \ref{assum:A2}). Note that there is no mechanism in \algLocal{} that enforces that $f(x^{j+1}) < f(x^j)$, and the numerical experiments in Sec.\ \ref{sec:numerical_results} (cf.\ Fig.\ \ref{fig:example_halfhalf}(b)) will indeed show that it is not a descent method. The following theorem shows that if $x^* \in \Bcl_{\eps_1}(x^1)$ and $\eps_1$ is small enough, then \algLocal{} is well-defined (i.e., Oracle \ref{oracle:1} is never called outside $U$) and the sequence $(x^j)_j$ generated by this method converges to $x^*$ with an R-superlinear rate:

    \begin{theorem} \label{thm:local_method_convergence}
        Assume that $f : U \rightarrow \R$ satisfies \ref{assum:A2} for $q, p \in \N$ and that $q \geq p$.
        Let $\sigma, \kappa \in (0,1)$.
        Then there is some $\epsmax > 0$ such that for all $\eps_1 \in (0,\epsmax]$ and all $x^1 \in \Bcl_{\eps_1}(x^*)$, \algLocal{} generates a sequence $(x^j)_j$ with
        \begin{align} \label{eq:local_method_convergence}
            \| x^j - x^* \| < \eps_j
            \quad \text{and} \quad
            \| x^j - x^{j+1} \| < \eps_j \quad \forall j \geq 2.
        \end{align}
        In particular, $(x^j)_j$ converges R-superlinearly to $x^*$ with order $(q+\sigma)/p$.
    \end{theorem}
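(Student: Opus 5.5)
The argument is an induction on $j$ that combines Lem.\ \ref{lem:T_W_error_estimate}(b), Lem.\ \ref{lem:algo_approx_W_termination}(a) and Lem.\ \ref{lem:eps_estimate}(b).

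\textbf{Constants.} Fix some $\epsmax^0 > 0$ with $\Bcl_{2\epsmax^0}(x^*) \subseteq U$. Lem.\ \ref{lem:T_W_error_estimate} then gives a constant $K$. Set
\begin{align*}
M := \left( \frac{1 + K (\epsmax^0)^{1-\sigma}}{\beta} \right)^{1/p}.
\end{align*}
Since $\eps^{1-\sigma}$ is increasing in $\eps$, this $M$ bounds the prefactor in \eqref{eq:T_W_error_estimate_3} uniformly for all $\eps \le \epsmax^0$. Next apply Lem.\ \ref{lem:eps_estimate}(b) with this $M$ to obtain a threshold $\epsmax^1$. Finally let $\epsmax := \min(\epsmax^0, \epsmax^1)$.

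\textbf{Inductive claim.} For every $j \in \N$ we show that $\Bcl_{\eps_j}(x^j) \subseteq U$, that Alg.\ \ref{algo:approx_W} terminates in iteration $j$, and that
\begin{align*}
\| x^{j+1} - x^* \| \le M \eps_j^{(q+\sigma)/p} < \eps_{j+1}.
\end{align*}

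\textbf{Base case $j=1$.} We have $x^1 \in \Bcl_{\eps_1}(x^*)$, so $\Bcl_{\eps_1}(x^1) \subseteq \Bcl_{2\eps_1}(x^*) \subseteq U$. Hence Oracle \ref{oracle:1} is only called in $U$, and Lem.\ \ref{lem:algo_approx_W_termination}(a) shows that Alg.\ \ref{algo:approx_W} terminates with a set $W_1$ satisfying \eqref{eq:T_W_error_estimate_2}.

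\textbf{Inductive step.} The hypotheses of Lem.\ \ref{lem:T_W_error_estimate}(b) hold with $\eps = \eps_j \le \eps_1 \le \epsmax^0$, since $(\eps_j)_j$ is decreasing by Lem.\ \ref{lem:eps_estimate}(a), and $x = x^j \in \Bcl_{\eps_j}(x^*)$. The lemma gives $\|x^{j+1} - x^*\| \le M \eps_j^{(q+\sigma)/p}$. The first inequality of Lem.\ \ref{lem:eps_estimate}(b) turns this into $< \eps_{j+1}$. Thus $x^{j+1} \in \Bcl_{\eps_{j+1}}(x^*)$, and the same ball-inclusion argument as in the base case shows the next iteration is well-defined. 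This yields the first inequality of \eqref{eq:local_method_convergence} for all $j \ge 2$.

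\textbf{Step-length bound.} For $j \ge 2$, the triangle inequality and the two distance bounds give
\begin{align*}
\|x^j - x^{j+1}\| \le \|x^j - x^*\| + \|x^{j+1} - x^*\| \le M\eps_{j-1}^{(q+\sigma)/p} + M\eps_j^{(q+\sigma)/p}.
\end{align*}
The second inequality of Lem.\ \ref{lem:eps_estimate}(b) makes the right-hand side $< \eps_j$. This is exactly why that lemma includes the inequality.

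\textbf{Convergence rate.} We have $\|x^j - x^*\| < \eps_j$, and $(\eps_j)_j$ vanishes Q-superlinearly with order $(q+\sigma)/p$ by Lem.\ \ref{lem:eps_estimate}(a). By the definition of R-convergence, $(x^j)_j$ converges R-superlinearly with that order.

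\textbf{Main obstacle.} The delicate point is choosing the constants consistently: $M$ must not depend on $j$, $x^j$ or $W_j$. Uniformity of $K$ in Lem.\ \ref{lem:T_W_error_estimate} handles $x^j$ and $W_j$. The monotonicity $\eps_j \le \eps_1 \le \epsmax^0$ handles the $\eps^{1-\sigma}$ factor. Alg.\ \ref{algo:approx_W} requires $\sigma < 1$, which is given.
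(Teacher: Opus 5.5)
Your proposal is correct and follows essentially the same route as the paper's proof: an induction that applies Lem.~\ref{lem:T_W_error_estimate}(b) with a constant $M$ independent of $j$, uses the two inequalities of Lem.~\ref{lem:eps_estimate}(b) for the distance bound and the step-length bound, and gets the rate from Lem.~\ref{lem:eps_estimate}(a). The differences are cosmetic. You bound the prefactor via $(\epsmax^0)^{1-\sigma}$ instead of imposing $\epsmax \le 1$. You also fix $K$ and $M$ before shrinking the radius, which makes explicit the paper's ``w.l.o.g.\ $\epsmax$ small enough'' step and its implicit reliance on Lem.~\ref{lem:algo_approx_W_termination}(a).
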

    \begin{proof}
        Let $\epsmax \leq 1$ small enough so that $\Bcl_{2 \epsmax}(x^*) \subseteq U$. Let $\eps_1 \in (0,\epsmax]$ and $x^1 \in \Bcl_{\eps_1}(x^*)$. By construction of \algApproxW{}, $W_j$ satisfies \eqref{eq:T_W_error_estimate_2} (with $\eps = \eps_j$) for all $j \in \N$. Thus, Lem.\ \ref{lem:T_W_error_estimate}(b) shows that there is some $K \geq 0$ with
        \begin{align*}
            \| x^2 - x^* \|
            &= \| \bar{z}^{q,W_1}(x^1,\eps_1) - x^* \|
            \stackrel{\eqref{eq:T_W_error_estimate_3}}{\leq} \left( \frac{1 + K \eps_1^{1 - \sigma}}{\beta} \right)^{1/p} \eps_1^{\frac{q+\sigma}{p}}
            \leq \left( \frac{1 + K}{\beta} \right)^{1/p} \eps_1^{\frac{q+\sigma}{p}}.
        \end{align*}
        By Lem.\ \ref{lem:eps_estimate}(b) (for $M = ((1+K)/\beta)^{1/p}$), we can assume w.l.o.g.\ that $\epsmax$ is small enough to have $\| x^2 - x^* \| < \eps_2$, i.e., $x^2 \in \Bcl_{\eps_2}(x^*)$. Since $K$ only depends on $\epsmax$ (cf.\ Lem.\ \ref{lem:q_order_error_estimate}), induction shows that $\| x^j - x^* \| < \eps_j$ for all $j \in \N$, proving the first inequality in \eqref{eq:local_method_convergence} (and showing that $\Bcl_{\eps_j}(x^j) \subseteq \Bcl_{2 \epsmax}(x^*) \subseteq U$ for all $j \in \N$). For the second inequality, let $j \geq 2$. Then
        \begin{equation} \label{eq:proof_thm_local_method_convergence}
            \begin{aligned}
                \| x^j - x^{j+1} \|
                &= \| x^j - \bar{z}^{q,W_j}(x^j,\eps_j) \| \\
                &\leq \| x^j - x^* \| + \| x^* - \bar{z}^{q,W_j}(x^j,\eps_j) \| \\
                &= \| \bar{z}^{q,W_{j-1}}(x^{j-1},\eps_{j-1}) - x^* \| + \| \bar{z}^{q,W_j}(x^j,\eps_j) - x^* \| \\
                &\stackrel{\eqref{eq:T_W_error_estimate_3}}{\leq} \left( \frac{1 + K \eps_{j-1}^{1 - \sigma}}{\beta} \right)^{1/p} \eps_{j-1}^{\frac{q+\sigma}{p}} + \left( \frac{1 + K \eps_j^{1 - \sigma}}{\beta} \right)^{1/p} \eps_j^{\frac{q+\sigma}{p}} \\
                &\leq \left( \frac{1 + K}{\beta} \right)^{1/p} \eps_{j-1}^{\frac{q+\sigma}{p}} + \left( \frac{1 + K}{\beta} \right)^{1/p} \eps_j^{\frac{q+\sigma}{p}}.
            \end{aligned}
        \end{equation}
        By the second inequality in Lem.\ \ref{lem:eps_estimate}(b), we can assume w.l.o.g.\ that $\epsmax$ is small enough so that the right-hand side of \eqref{eq:proof_thm_local_method_convergence} is less than $\eps_j$. Finally, the order of convergence of $(x^j)_j$ follows from combination of the first inequality in \eqref{eq:local_method_convergence} with Lem.\ \ref{lem:eps_estimate}(a), completing the proof.
    \end{proof}

    The second inequality in \eqref{eq:local_method_convergence} shows that the $\eps$-ball constraint in the subproblem \eqref{eq:bar_z_epigraph} becomes inactive after the first iteration of \algLocal{} if $\eps_1$ is small enough and $x^* \in \Bcl_{\eps_1}(x^1)$. This behavior is analogous to the local convergence of the trust-region Newton method, where the trust region eventually becomes inactive when the method is close enough to the minimum (see, e.g., \cite{NW2006}, Thm.\ 4.9). This property will be crucial for the globalization of the local method in Sec.\ \ref{sec:globalization} (and \cite{GU2026b}).

    Note that Thm.\ \ref{thm:local_method_convergence} provides a rate of convergence with respect to the index $j$ in \algLocal{} (which can be interpreted as the ``serious steps'', yielding a result as in \cite{ASS2023}), but not with respect to the number of oracle calls. Since all oracle calls in \algLocal{} are performed during the execution of \algApproxW{} in Step \ref{state:local_method_approx_W}, a rate with respect to oracle calls can be obtained when the number of iterations in \algApproxW{} is bounded:
    \begin{corollary} \label{cor:N_step_convergence}
        In the setting of Thm.\ \ref{thm:local_method_convergence}, let $j(l)$ be the index $j$ of the iteration of \algLocal{} in which the $l$-th oracle call occurs (within \algApproxW{}). Assume that the number of oracle calls performed during each execution of \algApproxW{} is bounded by $N \in \N$. Then $(x^{j(l)})_l$ converges $N$-step R-superlinearly (cf.\ \cite{NW2006}, (5.51)) to $x^*$ with order $(q+\sigma)/p$. Furthermore, $(x^{j(l)})_l$ converges R-superlinearly with order $((q + \sigma) / p)^{1/N}$.
    \end{corollary}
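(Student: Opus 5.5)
The plan is to transfer the estimate of Thm.\ \ref{thm:local_method_convergence}, which is indexed by the serious step $j$, to the oracle-call index $l$. This only requires relating $l$ and $j(l)$, and then invoking the definitions of $N$-step and ordinary R-superlinear convergence. Write $Q := (q+\sigma)/p > 1$. By Thm.\ \ref{thm:local_method_convergence} we have $\|x^j - x^*\| < \eps_j$ for all $j$. By Lem.\ \ref{lem:eps_estimate}(a), $(\eps_j)_j$ decreases monotonically and vanishes Q-superlinearly with order $Q$.

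The first step is to bound the iteration index in terms of $l$. Every execution of \algApproxW{} makes at least one oracle call: initializing with $W^1 = \{x^j\}$ already evaluates the oracle, and each subsequent iteration evaluates it at the new point $\bar z^{W^i}$. By assumption, each execution makes at most $N$ calls. Hence $j(l)$ is nondecreasing in $l$, every $j$ occurs for at most $N$ consecutive values of $l$, and
\begin{align*}
\lceil l/N \rceil \le j(l) \le l .
\end{align*}

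The second step proves $N$-step R-superlinear convergence of order $Q$. Set $\delta_l := \eps_{j(l)}$, so that $\|x^{j(l)} - x^*\| < \delta_l$. Since $j(l+N) \ge j(l)+1$ and $(\eps_j)_j$ is decreasing, we get
\begin{align*}
\delta_{l+N} \le \eps_{j(l)+1} \le C\, \eps_{j(l)}^{Q} = C\, \delta_l^{Q},
\end{align*}
where the second inequality is the order-$Q$ Q-convergence of $(\eps_j)_j$ from Lem.\ \ref{lem:eps_estimate}(a), with constant $C = \eps_1^{1-Q}\kappa^{Q-1}$ read off from its proof. Moreover $\delta_l \to 0$. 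This is exactly $N$-step Q-convergence of order $Q$ for the majorizing sequence $(\delta_l)_l$, and hence $N$-step R-convergence of $(x^{j(l)})_l$.

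The third step gives the single-step rate $Q^{1/N}$, and it is the main technical point: the majorant $(\delta_l)_l$ is constant on blocks of up to $N$ indices, so its consecutive ratios need not behave like order $Q^{1/N}$. I would replace it by a smooth majorant. From \eqref{eq:def_eps_j} and $j(l) \ge l/N$, we have
\begin{align*}
\|x^{j(l)} - x^*\| < \eps_1 \kappa^{Q^{j(l)-1}-1} \le \eps_1\kappa^{-1}\kappa^{Q^{l/N - 1}} =: \tilde\delta_l .
\end{align*}
Now $\tilde\delta_l = a\, b^{r^l}$ with $r = Q^{1/N} > 1$, $b = \kappa^{Q^{-1}} \in (0,1)$, and $a = \eps_1 \kappa^{-1}$. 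Therefore
\begin{align*}
\tilde\delta_{l+1}/\tilde\delta_l^{\,r} = a^{1-r},
\end{align*}
which is constant, and $\tilde\delta_{l+1}/\tilde\delta_l \to 0$. So $(\tilde\delta_l)_l$ converges Q-superlinearly with order $Q^{1/N}$, which gives R-superlinear convergence of $(x^{j(l)})_l$ with order $((q+\sigma)/p)^{1/N}$. The only remaining care point is verifying the exponent bookkeeping for the floor/ceiling in $j(l) \ge \lceil l/N \rceil$. Since the exponential map $\kappa^{(\cdot)}$ is monotone, this is harmless.
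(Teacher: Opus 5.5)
Your proof is correct and is essentially the paper's argument: for the $N$-step claim the paper likewise combines $j(l+N)\ge j(l)+1$, monotonicity of $(\eps_j)_j$, and the order-$Q$ ratio from the proof of Lem.~\ref{lem:eps_estimate}(a), and for the rate $Q^{1/N}$ it uses $j(l)\ge l/N$ with the monotone interpolant $\eps(l/N)=\eps_1\kappa^{Q^{l/N-1}-1}$, which is exactly your $\tilde\delta_l$. Your explicit computation $\tilde\delta_{l+1}/\tilde\delta_l^{\,r}=a^{1-r}$ fills in the step the paper leaves as ``analogous to Lem.~\ref{lem:eps_estimate}(a)'', and your upper bound $j(l)\le l$ is not needed.
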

    \begin{proof}
        \textbf{Part 1:} By construction, it holds $\| x^{j(l)} - x^* \| \leq \eps_{j(l)}$ and $\eps_{j(l+N)} \leq \eps_{j(l)+1}$ for all $l \in \N$. For $a > 0$ this implies
        \begin{align*}
            \frac{\eps_{j(l+N)}}{\eps_{j(l)}^a}
            \leq \frac{\eps_{j(l)+1}}{\eps_{j(l)}^a}
            \quad \forall l \in \N.
        \end{align*}
        As in the proof of Lem.\ \ref{lem:eps_estimate}(a), the $N$-step Q-superlinear convergence of $(\eps_{j(l)})_l$ with order $(q+\sigma)/p$ follows. In particular, $(x^{j(l)})_l$ converges $N$-step R-superlinearly with the same order. \\
        \textbf{Part 2:} To see that $(x^{j(l)})_l$ also converges R-superlinearly, note that by assumption, we have 
        \begin{align*}
            j(l) 
            \geq \lfloor \frac{l-1}{N} \rfloor + 1
            \geq \frac{l}{N}
            \quad \forall l \in \N.
        \end{align*}
        With a slight abuse of notation, consider the function $\eps : \R \rightarrow \R$, $j \mapsto \eps_1 \kappa^{(\frac{q+\sigma}{p})^{j - 1} - 1}$. Analogous to the proof of Lem.\ \ref{lem:eps_estimate}(a), it can be shown that the function $\eps$ decreases monotonically, which implies that $\eps_{j(l)} = \eps(j(l)) \leq \eps(l/N)$ for all $l \in \N$, and that $(\eps(l/N))_l$ vanishes Q-superlinearly with order $((q + \sigma) / p)^{1/N}$, which completes the proof. 
    \end{proof}

    Combined with Lem.\ \ref{lem:algo_approx_W_termination}(b), the previous corollary shows that if $S$ is finite (i.e., if $f$ is finite max-type function) and $\eps_1$ is small enough, then \algLocal{} generates a sequence that converges to $x^*$ at an R-superlinear rate with respect to oracle calls. (However, note that due to taking the $N$-th root, the order may be close to $1$.) The proof of Lem.\ \ref{lem:algo_approx_W_termination}(b) requires that $\sigma < 1$ in \algApproxW{}, so we cannot provably achieve an order of convergence of $(q+1)/p$ for $(x^j)_j$ while having bounded oracle calls in \algApproxW{}. In particular, for $q = p = 2$ and smooth $f$, we cannot recover the order $2$ of local convergence of the trust-region Newton method. (The difference is that the models in our method may be centered at any point in the trust region, not just its midpoint. This is also the reason why the trust-region radius must vanish in our method.)

\section{Globalization} \label{sec:globalization}

    In the previous section, we have resolved the Challenges \ref{enum:C1} and \ref{enum:C2}. To overcome Challenge \ref{enum:C3}, \algLocal{} has to be globalized. To do so, the idea is to construct an auxiliary trust-region method that generates sequences $(\hx^j)_j \subseteq \R^n$ and $(\glrad_j)_j \subseteq \R^{>0}$ with $\glrad_j \rightarrow 0$ and $\hx^j \in \Bcl_{\glrad_j}(x^*)$ for infinitely many $j$, while applying \algLocal{} with $x^1 = \hx^j$ and $\eps_1 = \glrad_j$ for each $j$. By Thm.\ \ref{thm:local_method_convergence}, this will eventually lead to a run of \algLocal{} being successful, in the sense that it generates a sequence converging R-superlinearly to $x^*$. To make this idea implementable, we have to provide a method that is able to generate $(\hx^j)_j$ and $(\glrad_j)_j$ with the above properties, and a way to check whether an application of \algLocal{} will be successful (since \algLocal{} has no stopping criterion). In this work, we only provide the latter, in Subsec.\ \ref{subsec:detecting_superlinear}. Since the method that provides $(\hx^j)_j$ and $(\glrad_j)_j$ must be a globally convergent solution method for nonsmooth optimization problems in its own right, its construction and the verification of the stated properties require their own theory. As such, we only give a brief summary of this method in Subsec.\ \ref{subsec:initial_data}, and refer to the accompanying paper \cite{GU2026b} for the details.

    \subsection{Detecting superlinear convergence} \label{subsec:detecting_superlinear}
    
        To obtain a criterion for a successful application of \algLocal{}, we exploit the second inequality in \eqref{eq:local_method_convergence}. It states that from the second iteration onward, the trust-region constraint in the subproblem \eqref{eq:bar_z_epigraph} that yields the next iterate is always inactive. As such, if this constraint is active for any iteration after the first one, we can immediately stop the algorithm. What remains is the question whether the trust-region constraint being inactive is sufficient for R-superlinear convergence to $x^*$. While we cannot prove that it is sufficient for convergence to $x^*$, it turns out that it is indeed sufficient for R-superlinear convergence to some critical point of $f$, which we prove in the following two lemmas. The first lemma shows that \emph{any} sequence $(x^j)_j \subseteq \R^n$ with $\| x^j - x^{j+1} \| \leq \eps_j$ for all $j \in \N$ and $(\eps_j)_j$ as in \eqref{eq:def_eps_j} converges R-superlinearly to \emph{some} point $\bar{x} \in \R^n$.
        
        \begin{lemma} \label{lem:global_convergence_local_algo}
            For $q,p \in \N$, $q \geq p$, $\eps_1 > 0$, and $\sigma, \kappa \in (0,1)$, consider the sequence $(\eps_j)_j$ from \eqref{eq:def_eps_j}, i.e., $\eps_j = \eps_1 \kappa^{(\frac{q+\sigma}{p})^{j-1} - 1}$ for $j \in \N$. If $(x^j)_j \subseteq \R^n$ is a sequence with $\| x^j - x^{j+1} \| \leq \eps_j$ for all $j \in \N$, then there are $C > 0$ and $\bar{x} \in \R^n$ such that
            \begin{align*}
                \| x^j - \bar{x} \| \leq C \eps_j \quad \forall j \in \N.
            \end{align*}
            In particular, $(x^j)_j$ converges R-superlinearly with order $(q + \sigma)/p$.
        \end{lemma}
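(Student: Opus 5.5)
The plan is to show that $(x^j)_j$ is Cauchy and to bound the tail sums of $(\eps_j)_j$ by a constant multiple of their first term. For $k > j$ the triangle inequality gives
\begin{align*}
    \| x^j - x^k \| \leq \sum_{i=j}^{k-1} \| x^i - x^{i+1} \| \leq \sum_{i=j}^{\infty} \eps_i .
\end{align*}
If $\sum_{i \geq j} \eps_i \leq C \eps_j$ holds for all $j$ with a fixed $C$, then the sequence is Cauchy, since $\eps_j \to 0$ by Lem.\ \ref{lem:eps_estimate}(a). It therefore has a limit $\bar{x} \in \R^n$. Letting $k \to \infty$ in the display then yields $\| x^j - \bar{x} \| \leq C \eps_j$.

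The main step is the tail estimate. I would use the ratio computation from the proof of Lem.\ \ref{lem:eps_estimate}(a) with $a = 1$. With $Q := (q+\sigma)/p > 1$, it gives
\begin{align*}
    \frac{\eps_{i+1}}{\eps_i} = \kappa^{Q^{i-1}(Q-1)} \leq \kappa^{Q-1} =: r < 1 \quad \forall i \in \N .
\end{align*}
The bound holds because $Q^{i-1} \geq 1$ and $\kappa \in (0,1)$. Iterating gives $\eps_i \leq r^{i-j} \eps_j$ for $i \geq j$. Summing the geometric series then gives $\sum_{i \geq j} \eps_i \leq \eps_j/(1-r)$, so $C := 1/(1-\kappa^{Q-1})$ works and is independent of $j$. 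I expect no real obstacle here. The only point needing care is that a uniform ratio bound $r<1$ is available, and this follows directly from $Q>1$ together with the monotone growth of $Q^{i-1}$.

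For the rate, set $\delta_j := C\eps_j$. By Lem.\ \ref{lem:eps_estimate}(a), $(\eps_j)_j$ vanishes Q-superlinearly with order $Q$: the ratio $\eps_{j+1}/\eps_j^Q = \eps_1^{1-Q}\kappa^{Q-1}$ is constant. The scaled sequence $(\delta_j)_j$ inherits this, since $\delta_{j+1}/\delta_j \to 0$ and $\delta_{j+1}/\delta_j^Q = C^{1-Q}\eps_{j+1}/\eps_j^Q$ stays bounded. Because $\|x^j-\bar{x}\| \leq \delta_j$ for all $j$, the definition of R-convergence shows that $(x^j)_j$ converges R-superlinearly to $\bar{x}$ with order $(q+\sigma)/p$.
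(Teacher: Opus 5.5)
Your proof is correct and follows the paper's argument: you apply the triangle inequality to the increments, get the Cauchy property from the vanishing tail sums $\sum_{i \geq j} \eps_i$, and bound these tails by a fixed multiple of $\eps_j$. The only difference is that last bound: you use the uniform ratio $\eps_{i+1}/\eps_i \leq \kappa^{Q-1}$ to get the closed form $C = 1/(1-\kappa^{Q-1})$, whereas the paper re-indexes to $C = \sum_{l \geq 0} \kappa^{Q^l - 1}$, a slightly smaller constant since $Q^l - 1 \geq l(Q-1)$, but one whose finiteness needs the separate comparison argument that your version avoids.
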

        \begin{proof}
            For ease of notation let $Q := \frac{q + \sigma}{p}$, so $\eps_j = \eps_1 \kappa^{Q^{j-1} - 1}$. Since $q \geq p$ and $\sigma \in (0,1)$ it holds $Q > 1$. \\
            \textbf{Part 1:} For $j \in \N$ consider the sequence $(E_j)_j$ defined by $E_j := \sum_{i = j}^\infty \eps_i$.
            Since $Q > 1$, there is some $N > 0$ such that $\eps_j = \eps_1 \kappa^{Q^{j-1} - 1} < \eps_1 \kappa^j$ for all $j > N$. This means that $(\eps_j)_j$ eventually decreases faster than a geometric sequence, which implies that $E_j$ is finite for all $j \in \N$ and that $E_j \rightarrow 0$. \\
            \textbf{Part 2:} For $j,k \in \N$, $j \leq k$, the triangle inequality implies
            \begin{align*}
                \| x^j - x^k \|
                \leq \sum_{i = j}^{k-1} \| x^i - x^{i+1} \|
                \leq \sum_{i = j}^{k-1} \eps_i 
                < E_j.
            \end{align*}
            Since $(E_j)_j$ vanishes, this shows that $(x^j)_j$ is a Cauchy sequence, which implies that it has a limit $\bar{x} \in \R^n$. In particular, letting $k \rightarrow \infty$ yields $\| x^j - \bar{x} \| \leq E_j$ for all $j \in \N$. \\
            \textbf{Part 3:} For all $j \in \N$ it holds
            \begin{align*}
                \frac{E_j}{\eps_j}
                &= \frac{\sum_{i = j}^\infty \eps_i}{\eps_j}
                = \sum_{i=j}^\infty \frac{\eps_i}{\eps_j}
                = \sum_{i=j}^\infty \frac{\kappa^{Q^{i-1} - 1}}{\kappa^{Q^{j-1} - 1}}
                = \sum_{i=j}^\infty \kappa^{Q^{i-1} - Q^{j-1}}
                = \sum_{i=j}^\infty \kappa^{Q^{j-1} (Q^{i-j} - 1)} \\
                &= \sum_{i=j}^\infty \left( \kappa^{Q^{j-1}} \right)^{Q^{i-j} - 1}
                = \sum_{l = 0}^\infty \left( \kappa^{Q^{j-1}} \right)^{Q^l - 1}
                \leq \sum_{l = 0}^\infty \kappa^{Q^l - 1}
                =: C \in \R,
            \end{align*}
            where finiteness of $C$ follows from $(\kappa^{Q^l - 1})_l$ eventually decreasing faster than $(\kappa^l)_l$. Combined with Part 2 we obtain
            \begin{align*}
                \| x^j - \bar{x} \| 
                \leq E_j
                \leq C \eps_j.
            \end{align*}
            Finally, the order of convergence of $(x^j)_j$ follows from Lem.\ \ref{lem:eps_estimate}(a).
        \end{proof}
    
        The second lemma shows that if the trust-region constraint in \algLocal{} is inactive for all $j$ larger than some $\jthr \in \N$, then $(x^j)_j$ converges R-superlinearly to a point that is at least critical. 
    
        \begin{lemma} \label{lem:global_convergence_critical}
            Let $q, p \in \N$. Assume that $f : U \rightarrow \R$ satisfies \ref{assum:A1} and that \algLocal{} generates a sequence $(x^j)_j$ with $\Bcl_{\eps_j}(x^j) \subseteq U$ for all $j \in \N$. If there is some $\jthr \in \N$ such that $\| x^j - x^{j+1} \| < \eps_j$ for all $j > \jthr$ (i.e., the trust-region constraint in \eqref{eq:bar_z_epigraph} is inactive for all $j > \jthr$), then $(x^j)_j$ converges R-superlinearly with order $(q+\sigma)/p$ to a critical point of $f$.
        \end{lemma}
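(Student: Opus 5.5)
Since every $x^{j+1} = \bar{z}^{q,W_j}(x^j,\eps_j)$ lies in $\Bcl_{\eps_j}(x^j)$, we have $\| x^j - x^{j+1} \| \leq \eps_j$ for all $j \in \N$, without using the hypothesis. Lem.\ \ref{lem:global_convergence_local_algo} therefore applies directly. It yields some $\bar{x}$ and $C > 0$ with $\| x^j - \bar{x} \| \leq C \eps_j$, which already gives R-superlinear convergence of order $(q+\sigma)/p$. Since $\Bcl_{\eps_j}(x^j) \subseteq U$ and $\eps_j \to 0$, I need $\bar{x} \in U$. I would get this from the fact that all iterates stay in $\cl$ of the union of the balls, and if necessary argue via a neighborhood of $\bar{x}$. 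The remaining task is to show that $0 \in \partial f(\bar{x})$, and this is where the hypothesis $\jthr$ is used.

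For $j > \jthr$, the point $x^{j+1}$ minimizes $\T^{q,W_j}$ over $\Bcl_{\eps_j}(x^j)$ and lies in the open ball. Hence it is a local minimizer of the finite max-type function $\T^{q,W_j}$, so $0 \in \partial \T^{q,W_j}(x^{j+1})$. Explicitly, there are convex weights $\lambda^j_y \geq 0$, supported on the model-active set $I_j := \{ y \in W_j : T^q f_{s(y)}(x^{j+1},y) = \T^{q,W_j}(x^{j+1}) \}$, with
\begin{align*}
    0 = \sum_{y \in I_j} \lambda^j_y \nabla_z T^q f_{s(y)}(x^{j+1},y).
\end{align*}
Since $\| x^{j+1} - y \| \leq 2\eps_j$ for $y \in W_j$, uniform continuity of the derivatives of $f_s$ in $(s,x)$ on a compact neighborhood gives $\| \nabla_z T^q f_{s(y)}(x^{j+1},y) - \nabla f_{s(y)}(\bar{x}) \| \to 0$ uniformly in $y \in W_j$. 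Thus $\sum_y \lambda^j_y \nabla f_{s(y)}(\bar{x}) \to 0$.

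The main obstacle is to show that the indices $s(y)$ with $y \in I_j$ become asymptotically active at $\bar{x}$. For $y \in I_j$, the Taylor error bound of Lem.\ \ref{lem:q_order_error_estimate} (Part 2) gives $f_{s(y)}(x^{j+1}) \geq \T^{q,W_j}(x^{j+1}) - K\eps_j^{q+1}$. Moreover, $\T^{q,W_j}(x^{j+1}) \geq T^q f_{s(x^j)}(x^{j+1},x^j) \geq f(x^j) - L\eps_j$ by local boundedness of the derivatives, because $x^j \in W_j$ (initialization $W^1 = \{x^j\}$). Hence $f_{s(y)}(x^{j+1}) \geq f(x^j) - O(\eps_j)$. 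Passing to the limit, and using continuity of $f$ and of $(s,x) \mapsto f_s(x)$ with $\| y - \bar{x} \| \to 0$, every limit index $s$ satisfies $f_s(\bar{x}) \geq f(\bar{x})$, i.e., $s \in A(\bar{x})$.

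To conclude, I would use Carathéodory to restrict each convex combination to at most $n+1$ terms. Then I take a subsequence along which the weights and the indices $s(y) \in S$ converge, using compactness of $S$ and of the simplex; if $S$ is not metrizable, I would work with nets or with the closed set $\{(s,\nabla f_s(\bar{x}))\}$ instead. This gives $0 = \sum \lambda_i \nabla f_{s_i}(\bar{x})$ with $s_i \in A(\bar{x})$. Hence $0 \in \conv(\{\nabla f_s(\bar{x}) : s \in A(\bar{x})\}) = \partial f(\bar{x})$, so $\bar{x}$ is critical.
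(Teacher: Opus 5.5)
Your proposal is correct, and its first half matches the paper. The paper writes the KKT conditions of the epigraph problem \eqref{eq:bar_z_epigraph} via MFCQ; you use Clarke stationarity of the finite max $\T^{q,W_j}$ at the interior minimizer $x^{j+1}$, which gives the same convex combination. Both arguments then remove the higher-order Taylor terms using $\|x^{j+1}-y\|\leq 2\eps_j$.

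The routes differ in the last step. The paper notes that $\nabla f_{s(y)}(y)\in\partial f(y)$ for all $y\in W_j$. Hence the vanishing vector $\sum_{y}\lambda_y\nabla f_{s(y)}(y)$ lies in the Goldstein subdifferential $\partial_{\eps_j}f(x^j)$, and the paper simply cites upper semicontinuity of the Clarke subdifferential to get $0\in\partial f(\bar x)$. You instead prove that closedness by hand inside the representation: you shift the gradients to $\bar x$ by uniform continuity, show that limit indices are active, and pass to the limit using Carath\'eodory and compactness of $S$ and of the simplex. This is self-contained and uses only $\partial f(\bar x)=\conv(\{\nabla f_s(\bar x): s\in A(\bar x)\})$, at the price of the subsequence or net bookkeeping.

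What you call the main obstacle is not one. Oracle~\ref{oracle:1} gives $s(y)\in A(y)$, i.e.\ $f_{s(y)}(y)=f(y)$, for \emph{every} $y\in W_j$. Since $\|y-\bar x\|\leq \eps_j+\|x^j-\bar x\|\to 0$, joint continuity immediately makes every limit index active at $\bar x$. Your detour through the model-active set $I_j$, the Taylor bound and $x^j\in W_j$ is valid but unnecessary, and this oracle property is exactly what the paper exploits.

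Your justification of $\bar x\in U$ does not work: the closure of the union of the balls only lies in $\cl(U)$. The paper's proof tacitly needs $\bar x\in U$ as well, both for bounded derivatives near the limit and for criticality at $\bar x$ to make sense. It also tacitly needs $q\geq p$ to invoke Lem.~\ref{lem:global_convergence_local_algo}, which your proposal also uses without checking. Both are best read as implicit hypotheses of the statement rather than something either proof establishes.
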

        \begin{proof}
            \textbf{Part 1:} We first consider the optimality conditions of the epigraph formulation \eqref{eq:bar_z_epigraph} for general $x$, $\eps$, and $W$. To this end, let $x \in U$ and $\eps > 0$ with $\Bcl_\eps(x) \subseteq U$ and let $W \subseteq \Bcl_{\eps}(x)$ be finite and nonempty. Denote $\bar{z}^W = \bar{z}^{q,W}(x,\eps)$. It is easy to see that the constraints in \eqref{eq:bar_z_epigraph} satisfy the MFCQ (see, e.g., \cite{NW2006}, Def.\ 12.6). Assume that the trust-region constraint is inactive. Then the first-order necessary optimality conditions imply that there are $\lambda_y \geq 0$, $y \in W$, such that
            \begin{align*}
                0  
                = \begin{pmatrix}
                    0 \\
                    1
                \end{pmatrix}
                +
                \sum_{y \in W} \lambda_y
                \begin{pmatrix}
                    \nabla (T^q f_{s(y)}(\cdot,y))(\bar{z}^W) \\
                    -1
                \end{pmatrix}
            \end{align*}
            with $\lambda_y = 0$ if $T^q f_{s(y)}(\bar{z}^W,y) < \T^{q,W}(\bar{z}^W)$. The second line of this equation yields $\sum_{y \in W} \lambda_y = 1$ and the first line yields
            \begin{equation} \label{eq:proof_lem_global_convergence_critical_1}
                \begin{aligned}
                    0
                    &= \sum_{y \in W} \lambda_y \nabla (T^q f_{s(y)}(\cdot,y))(\bar{z}^W) \\
                    &= \underbrace{\sum_{y \in W} \lambda_y \nabla f_{s(y)}(y)}_{(a)}
                    +
                    \underbrace{\sum_{y \in W} \lambda_y  \sum_{m = 2}^q \frac{1}{m!} \nabla \left( \deriv^{m} f_{s(y)}(y)(\cdot - y)^m \right)(\bar{z}^W)}_{(b)}.
                \end{aligned}
            \end{equation}
            \textbf{Part 2:} For $j > \jthr$ consider \eqref{eq:proof_lem_global_convergence_critical_1} with $x = x^j$, $\eps = \eps_j$, and $W = W_j$ (cf.\ Step \ref{state:approx_W_stopping_criterion} in \algLocal{}). Note that for each $y \in W_j$ and $m \in \{2,\dots,q\}$, each summand in $\nabla \left( \deriv^{m} f_{s(y)}(y)(\cdot - y)^m \right)(\bar{z}^{W_j})$ contains a factor $\bar{z}_i^{W_j} - y_i$, $i \in \{1,\dots,n\}$ (cf.\ \eqref{eq:def_taylor_series}). Since $\| \bar{z}^{W_j} - y \| \leq 2 \eps_j \rightarrow 0$ and the partial derivatives are bounded above, this implies that the term (b) in \eqref{eq:proof_lem_global_convergence_critical_1} vanishes for $j \rightarrow \infty$. In particular, since the left-hand side in \eqref{eq:proof_lem_global_convergence_critical_1} is zero, the term (a) vanishes as well. \\
            \textbf{Part 3:} By Lem.\ \ref{lem:global_convergence_local_algo} the sequence $(x^j)_j$ converges to some $\bar{x} \in \R^n$ (R-superlinearly with order $(q+\sigma)/p$). Note that $\nabla f_{s(y)}(y) \in \partial f(y) \subseteq \conv(\partial f(\Bcl_{\eps_j}(x^j))) =: \partial_{\eps_j} f(x^j)$ for all $y \in W_j$. (The set $\partial_{\eps_j} f(x^j)$ is known as the \emph{Goldstein} $\eps_j$\emph{-subdifferential} \cite{G1977} of $f$ at $x^j$.) Part 2 showed that the element with the smallest norm in $\partial_{\eps_j} f(x^j)$ vanishes for $j \rightarrow \infty$. Upper semicontinuity of the Clarke subdifferential implies that $\bar{x}$ is critical (see, e.g., \cite{G2022}, Def.\ 4.4.1 and Lem.\ 4.4.4 for details), completing the proof. 
        \end{proof}
    
        Note that Lem.\ \ref{lem:global_convergence_critical} does not require that $f$ satisfies the growth assumption \ref{assum:A2} and can therefore be used to detect R-superlinear convergence of $(x^j)_j$ to critical points for a relatively general class of functions.

    \subsection{Computing the initial data} \label{subsec:initial_data}

        In this subsection we briefly summarize the method from \cite{GU2026b} which, given a sequence $(\glrad_j)_j$ with $\glrad_j \rightarrow 0$, is able to compute a sequence $(\hx^j)_j$ with $\hx^j \in \Bcl_{\glrad_j}(x^*)$ for infinitely many $j \in \N$. Its derivation is based around the theoretical quantity
        \begin{align*}
            \Lambda^p(x,\glrad) := \frac{f(x) - f(z^*(x,\glrad))}{\glrad^p} \geq 0,
            \ \text{where} \
            z^*(x,\glrad) \in \argmin_{z \in \Bcl_\glrad(x)} f(x),
        \end{align*}
        $p \in \N$, $x \in U$, and $\glrad > 0$ with $\Bcl_\glrad(x) \subseteq U$. The idea is to show that for a function satisfying \ref{assum:A2} for order $p$, there is a constant $C > 0$ such that if $x$ is close to $x^*$ but $x^* \notin \Bcl_\glrad(x)$, then $\Lambda^p(x,\glrad) \geq C$. In words, this means that as long as $x^*$ is not in the trust region $\Bcl_\glrad(x)$, the value of $f$ can be decreased by at least $C \glrad^p$. For example, consider the function $f : \R \rightarrow \R$, $x \mapsto a |x|^p$ for $a > 0$. Let $x, \glrad > 0$ such that $0 \notin \Bcl_\glrad(x)$ (i.e., $\glrad < x$). Then $z^*(x,\glrad) = x - \glrad$ and, since $(x-\glrad)/x \in (0,1)$, we have
        \begin{align*}
            \Lambda^p(x,\glrad)
            = \frac{a x^p - a (x - \glrad)^p}{\glrad^p}
            = a \frac{1 - (\frac{x - \glrad}{x})^p}{(1 - \frac{x - \glrad}{x})^p}
            \geq a \frac{1 - \frac{x - \glrad}{x}}{(1 - \frac{x - \glrad}{x})^p}
            \geq a.
        \end{align*}
        In \cite{GU2026b}, for general functions satisfying \ref{assum:A2}, it is shown that for $p = 1$, the above property holds when $S$ is finite (cf.\ \cite{GU2026b}, Sec.\ 4.1), and for $p = 2$, it holds when $S$ is finite and the vanishing convex combination of gradients at $x^*$ is unique and ``stable'' (cf.\ \cite{GU2026b}, Sec.\ 4.2).
        
        Now consider a sequence $(\glrad_j)_j$ with $\glrad_j \rightarrow 0$. From a theoretical point of view, the above property of $\Lambda^p$ allows for the conceptual \algGlobal{} for computing a corresponding sequence $(\hx^j)_j$.
        \begin{algorithm} 
            \caption{Conceptual globalized method}
            \label{algo:conceptual_global_method}
            \begin{algorithmic}[1] 
                \Require Initial point $\hx^0 = \hx^{1,0} \in \R^n$, vanishing sequences $(\glrad_j)_j$, $(\tau_j)_j \subseteq \R^{> 0}$, growth order $p \in \N$.
                \For{$j = 1,2,\dots$}
                    \For{$i = 0,1,\dots$}
                        \If{$\Lambda^p(\hx^{j,i},\glrad_j) < \tau_j$} \label{state:conceptual_global_method_decrease_condition}
                            \State Break $i$-loop.
                        \Else
                            \State Set $\hx^{j,i+1} = z^*(\hx^{j,i},\glrad_j)$. \label{state:conceptual_global_method_new_iterate}
                        \EndIf
                    \EndFor
                    \State Set $\hx^{j+1,0} = \hx^{j,i}$ and $\hx^j = \hx^{j,i}$.
                    \State Apply \algLocal{} with $x^1 = \hx^j$ and $\eps_1 = \glrad_j$. If the trust-region constraint is 
                    \Statex \hspace{\algorithmicindent}active in any iteration after the first, then stop \algLocal{}. \label{state:conceptual_global_method_apply_local}
                \EndFor
            \end{algorithmic}
        \end{algorithm}
        For each $j$, it decreases the objective value by $\tau_j \glrad_j^p$ as long as possible. When this is no longer possible, the trust-region radius is changed and \algLocal{} is attempted. A proof by contradiction shows that this eventually leads to a successful run of \algLocal{}: If \algGlobal{} remains in Step \ref{state:conceptual_global_method_apply_local} infinitely, then \algLocal{} is successful by Lem.\ \ref{lem:global_convergence_local_algo} and Lem.\ \ref{lem:global_convergence_critical}. Assume that this never happens. If $f$ is bounded below, then the $i$-loops must always be finite, such that $(\hx^j)_j$ is an infinite sequence. In particular, $\Lambda^p(\hx^j,\glrad_j) \rightarrow 0$ for $j \rightarrow \infty$. Using the Goldstein $\eps$-subdifferential \cite{G1977}, one can show that this implies that all accumulation points of $(\hx^j)_j$ must be critical points of $f$ (cf.\ \cite{GU2026b}, Sec.\ 2.1). If $f$ satisfies \ref{assum:A2} for $x^*$ being one of these accumulation points, then the above property of $\Lambda^p$ assures that $x^* \in \Bcl_{\glrad_j}(\hx^j)$ for all $j$ with $\tau_j < C$, which are infinitely many since $\tau_j \rightarrow 0$. Thus, for some $j \in \N$, the requirements of Thm.\ \ref{thm:local_method_convergence} must hold, such that the algorithm remains in Step \ref{state:conceptual_global_method_apply_local} infinitely, leading to a contradiction. 
    
        Clearly \algGlobal{} is purely conceptual since $z^*$ (and therefore $\Lambda^p$) cannot be computed in practice. However, it can be turned into an implementable algorithm by replacing $z^*(\hx^{j,i},\glrad_j)$ in Step \ref{state:conceptual_global_method_decrease_condition} and Step \ref{state:conceptual_global_method_new_iterate} by $\bar{z}^{q,W}(\hx^{j,i},\glrad_j)$ from \eqref{eq:bar_z_epigraph} (for a set $W$ from \algApproxW{}). While the resulting method uses the same model and the same subproblem as \algLocal{}, the key difference is that \algGlobal{} enforces sufficient decrease in every iteration via Step \ref{state:conceptual_global_method_decrease_condition} and does not attempt to achieve fast convergence via Lem.\ \ref{lem:T_W_error_estimate}. In particular, the sequence $(\glrad_j)_j$ does not have to be chosen as in Challenge \ref{enum:C2}, and can instead be any vanishing sequence (like a linearly vanishing sequence as in standard trust-region methods). Fortunately, for $q \geq p$, this modified version of \algGlobal{} still retains all convergence properties discussed above (cf.\ \cite{GU2026b}, Cor.\ 3.1), such that it eventually executes a successful run of \algLocal{} with R-superlinear convergence in Step \ref{state:conceptual_global_method_apply_local}. Numerical experiments with the resulting method are shown in \cite{GU2026b}, Sec.\ 5.
    
    \section{Numerical experiments} \label{sec:numerical_results}

        In this section, we show the behavior of an implementation of \algLocal{} in numerical experiments. We first consider a univariate toy example that allows us to verify the order $(q + \sigma)/p$ of R-convergence from Thm.\ \ref{thm:local_method_convergence} for different model orders $q$. Afterwards, we analyze the behavior on a nonconvex finite max-type function and on a convex \lc{2} function which is not of finite max-type. Finally, we compare it to the superlinear solvers \vubundle{}\footnote{\url{https://github.com/GillesBareilles/NonSmoothSolvers.jl} (Retrieved Mar.\ 24, 2026)} and \superpolyak{}\footnote{\url{https://github.com/COR-OPT/SuperPolyak.jl} (Retrieved Mar.\ 24, 2026)}. Matlab code for the reproduction of all experiments shown in this section is available at \url{https://github.com/b-gebken/higher-order-trust-region-bundle-method}. 

        In all experiments, we assume that $f$ is sufficiently smooth at every point where Oracle \ref{oracle:1} is called, and use the exact analytic formulas of $f$ for the derivatives. For the parameters of \algLocal{}, we always use $\sigma = 0.5$ and $\kappa = 0.75$. As a stopping criterion, we check whether $\eps_j$ lies below a certain threshold value $\epsthr$. (This value varies in our experiments due to the different accuracies with which the subproblem \eqref{eq:bar_z_epigraph} is solved for different $q$.) For the initialization of \algApproxW{} (in Step \ref{state:local_method_approx_W} of \algLocal{}), except for Ex.\ \ref{example:1d_symbolic}, we reuse all points in the current trust region at which the oracle was already evaluated in previous iterations (cf.\ Rem.\ \ref{rem:bundle_initialization}(a)). (For Ex.\ \ref{example:1d_symbolic}, we simply use $W^1 = \{ x \}$ in \algApproxW{}.) While Thm.\ \ref{thm:local_method_convergence} only guarantees local convergence of \algLocal{}, we deliberately do not choose the initial points $x^1$ particularly close to the respective minima $x^*$ to highlight the surprising robustness of \algLocal{} when it comes to the initial data. In particular, as suggested by Lem.\ \ref{lem:global_convergence_critical}, we will see that the first few iterates $x^j$ lying outside $\Bcl_{\eps_j}(x^*)$ may not cause any convergence issues. For the comparison to other solvers, we mainly focus on the number of oracle calls as a performance metric. For \algLocal{}, all derivatives are evaluated the same number of times (during the construction of the subproblem \eqref{eq:bar_z_epigraph} in Step \ref{state:approx_W_solve_subproblem} of \algApproxW{}). The number of objective evaluations is larger by one, since the objective value of the final iterate is evaluated in Step \ref{state:approx_W_stopping_criterion} of \algApproxW{} before the algorithm stops. To show the impact of the effort of solving subproblem \eqref{eq:bar_z_epigraph}, we also state the actual runtimes for the comparison\footnote{Hardware used for the experiments: Intel(R) Core(TM) i7-8565 CPU@1.80GHz, Intel(R) UHD Graphics 620, 16GB RAM.}. However, since the methods are implemented in different programming languages, and oracle calls are cheap in our examples, comparing these runtimes has limited significance. 

        Clearly, \algLocal{} can only approximate the minimum of a function up to the accuracy with which the subproblem \eqref{eq:bar_z_epigraph} is solved. This introduces unwanted artifacts into \algLocal{} when $\eps_j$ becomes lower than that accuracy. In particular, $\eps_j$ may not lie below machine precision (which, in Matlab, is $2^{-52} \approx 2 \cdot 10^{-16}$). While this is unlikely to be an issue in practice, it does become a hindrance when analyzing high orders of convergence. As such, to first show the ``clean'' behavior of \algLocal{}, we consider an example with $n = 1$ and highly accurate solutions of \eqref{eq:bar_z_epigraph}. For $n = 1$, the solution of \eqref{eq:bar_z_epigraph} is a critical point of one of the Taylor expansions, a point where two expansions have the same value, or one of the two boundary points of $\Bcl_\eps(x)$. Since these are (typically) finitely many points, we can simply check their objective values to find the solution. To achieve high accuracy, we use Matlab's variable precision arithmetic (\texttt{vpa}).
        
        \begin{example} \label{example:1d_symbolic}
            For $n \in \N$ consider the nonconvex function
            \begin{align*}
                f : \R^n \rightarrow \R, \quad x \mapsto \max_{i \in \{1,\dots,n\}} \sqrt{|x_i| + 1/4} - 1/2.
            \end{align*}
            It is easy to see that $f$ is a finite max-type function with $|S| = 2n$. The unique global minimum is $x^* = 0 \in \R^n$, for which the growth assumption \ref{assum:A2} holds for $p = 1$. Now consider the case $n = 1$. Fig.\ \ref{fig:example_1d_symbolic_and_LW2019_85_I}(a) shows, in black, the distances $\| x^j - x^* \|$ for sequences $(x^j)_j$ generated by \algLocal{} for $q \in \{1,\dots,5\}$, $x^1 = 0.1$, $\eps_1 = 0.5$, and $\epsthr = 10^{-500}$. 
            \begin{figure}
                \centering
                \parbox[b]{0.49\textwidth}{
                    \centering 
                    \includegraphics[width=0.45\textwidth]{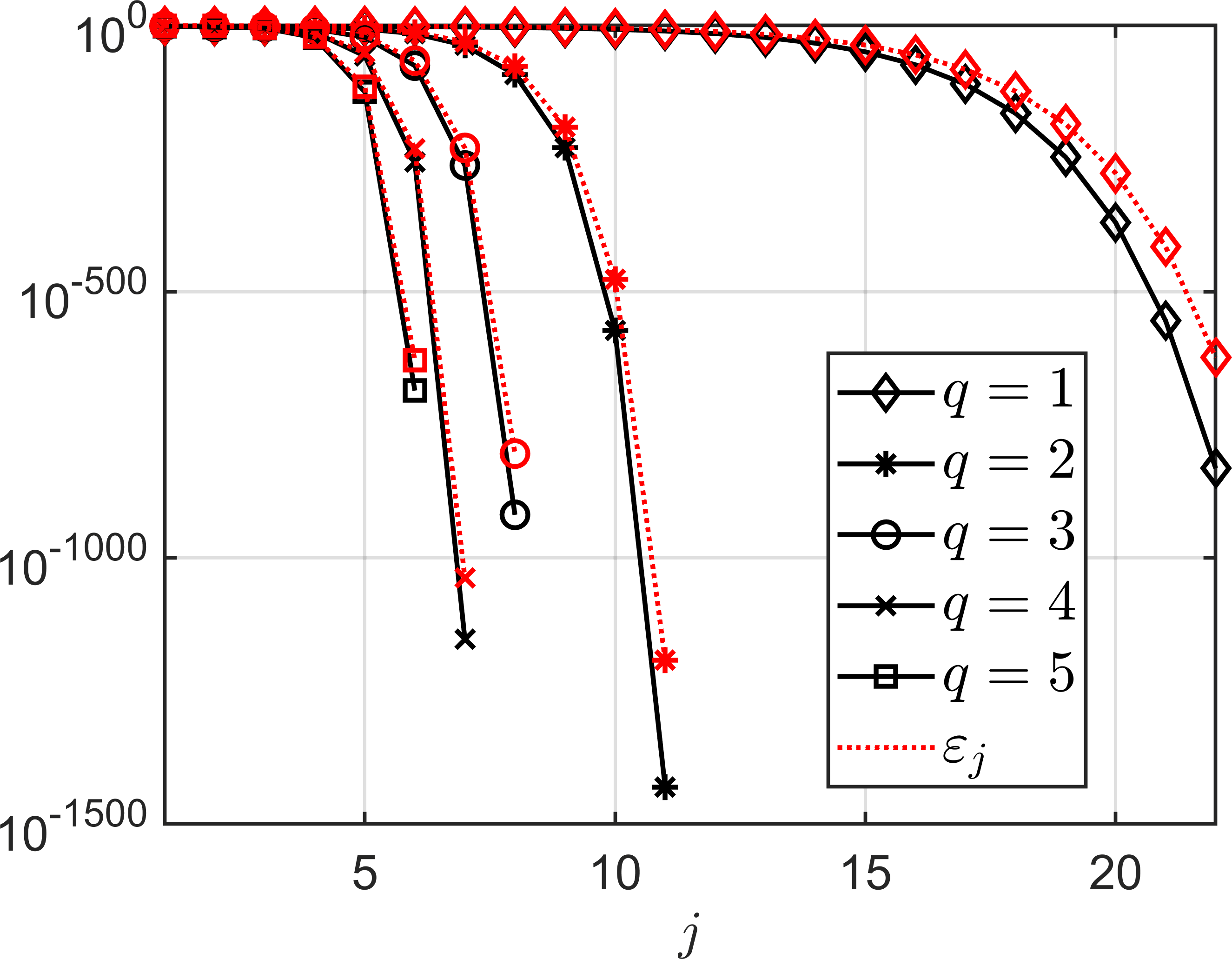}\\
                    (a)
        		}
                \parbox[b]{0.49\textwidth}{
                    \centering 
                    \includegraphics[width=0.425\textwidth]{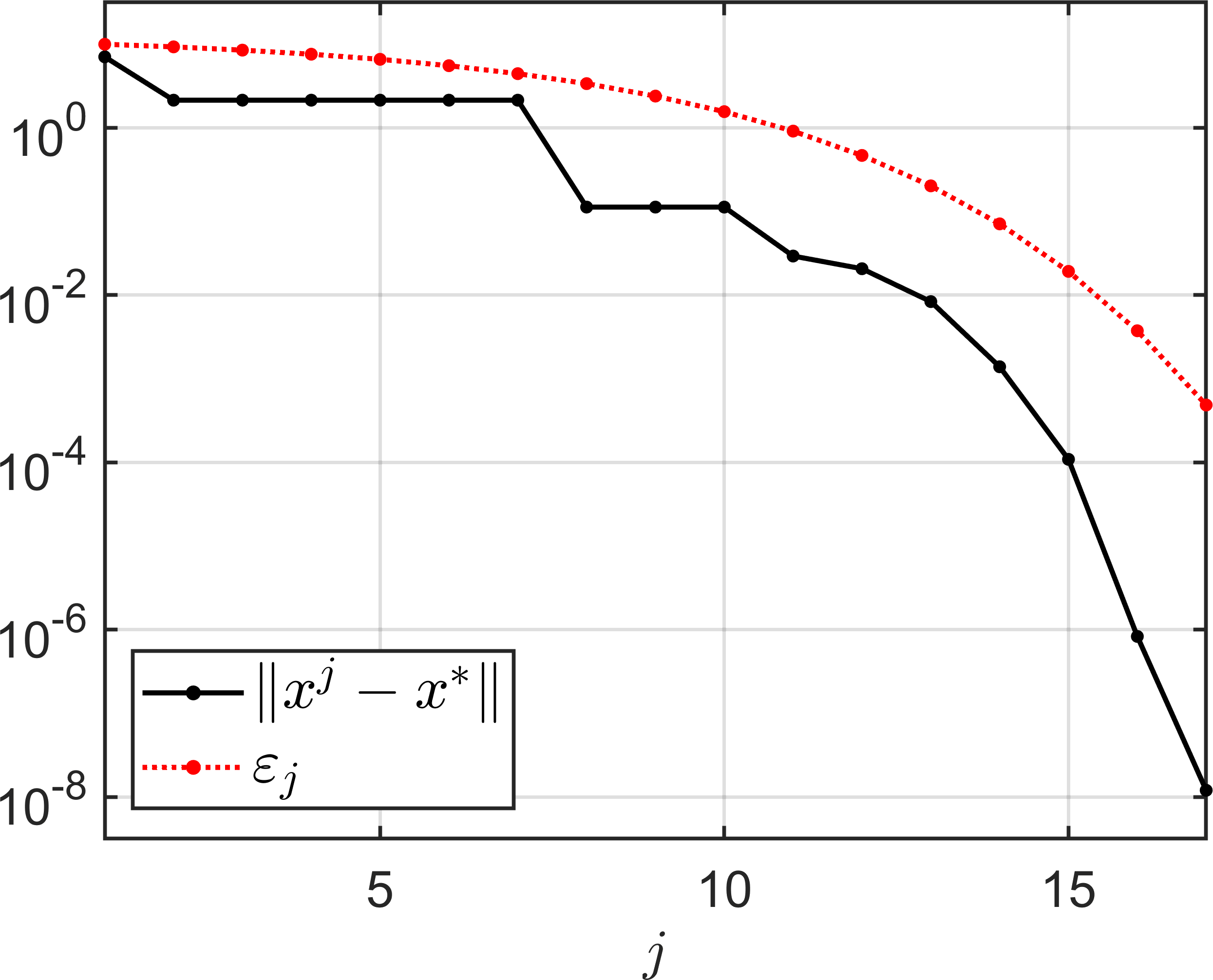}\\
                    (b)
        		}
                \caption{(a) The distance $\| x^j - x^* \|$ (black) for sequences $(x^j)_j$ generated by \algLocal{} with varying order $q$ of Taylor expansion in Ex.\ \ref{example:1d_symbolic}. The red, dotted lines show, depending on the marker, the corresponding upper bound $(\eps_j)_j$ from Thm.\ \ref{thm:local_method_convergence}. (b) The distance $\| x^j - x^* \|$ in Ex.\ \ref{example:LW2019_85} and the corresponding sequence $(\eps_j)_j$.}
                \label{fig:example_1d_symbolic_and_LW2019_85_I}
            \end{figure}
            The subproblems were solved with $2000$ digits of accuracy via Matlab's \texttt{vpa}. For each run, it holds $|W_j| = 2$ in every iteration. As expected due to Thm.\ \ref{thm:local_method_convergence}, the distance $\| x^j - x^* \|$ is bounded above by the corresponding sequence $(\eps_j)_j$, shown as red, dotted lines. In particular, since $p = 1$, the order $(q + \sigma)/p$ of R-convergence is quadratic (order $\geq 2$) for $q = 2$, cubic (order $\geq 3$) for $q = 3$, quartic (order $\geq 4$) for $q = 4$, and quintic (order $\geq 5$) for $q = 5$. 
        \end{example}

        Due to the simplicity of the nonsmoothness of the function in Ex.\ \ref{example:1d_symbolic} for $n = 1$, \algApproxW{} (with initialization $W^1 = \{ x \}$) only required two oracle calls in every iteration of \algLocal{}. The next example shows a more realistic case with a more complex nonsmooth structure. From now on, we always we use the bundling technique described in Rem.\ \ref{rem:bundle_initialization}(a) for initializing \algApproxW{}. We consider the quadratically growing, nonconvex function (8.5) from \cite{LW2019}, and use second-order models for \algLocal{}. The subproblem \eqref{eq:bar_z_epigraph} is solved via \ipopt{} \cite{WB2005} (using the Matlab interface \mexipopt{}\footnote{\url{https://github.com/ebertolazzi/mexIPOPT} (Retrieved Mar.\ 24, 2026)}). Since \ipopt{} failed to converge when $\eps_j \leq 10^{-4}$, we use the threshold $\epsthr = 10^{-3}$ for stopping.

        \begin{example} \label{example:LW2019_85}
            For $n \in \N$, $m \in \{1,\dots,n+1\}$, and $I = \{1,\dots,m\}$, consider the nonconvex function
            \begin{align*}
                f : \R^n \rightarrow \R,
                \quad
                x \mapsto \sum_{i \in I} \left| g_i^\top x + \frac{1}{2} x^\top H_i x + \frac{c_i}{24} \| x \|^4 \right|
            \end{align*}
            from \cite{LW2019}, where $c_i > 0$ for all $i \in I$, $H_i \in \R^{n \times n}$ is symmetric, pos.\ definite for all $i \in I$, and the vectors $g_i \in \R^n$, $i \in I$, are affinely independent with $\sum_{i \in I} \lambda_i g_i = 0$ for some $\lambda \in (\R^{>0})^m$ with $\sum_{i \in I} \lambda_i = 1$. It is easy to see that $f$ is a finite max-type function with $|S| = 2^m$. The global minimum of this function is $x^* = 0 \in \R^n$, for which the growth assumption \ref{assum:A2} holds with $p = 2$ (since all $H_i$ are pos.\ definite). 
            We generate a random instance of this problem for $n = 50$ and $m = 40$ and apply \algLocal{} with $q = 2$, $x^1 = (1,\dots,1)^\top \in \R^{50}$, $\eps_1 = 10$, and $\epsthr = 10^{-3}$. (For details on the random generation, see the corresponding code.) Fig.\ \ref{fig:example_1d_symbolic_and_LW2019_85_I}(b) shows the distance $\| x^j - x^* \|$ of the resulting sequence $(x^j)_j$ and the upper bound $(\eps_j)_j$, confirming the R-superlinear convergence (with order $(q + \sigma)/p = 1.25$). Fig.\ \ref{fig:example_LW2019_85_II}(a) shows the number of oracle calls that were required by \algApproxW{} in each iteration of \algLocal{}. 
            \begin{figure}
                \centering
                \parbox[b]{0.49\textwidth}{
                    \centering 
                    \includegraphics[width=0.45\textwidth]{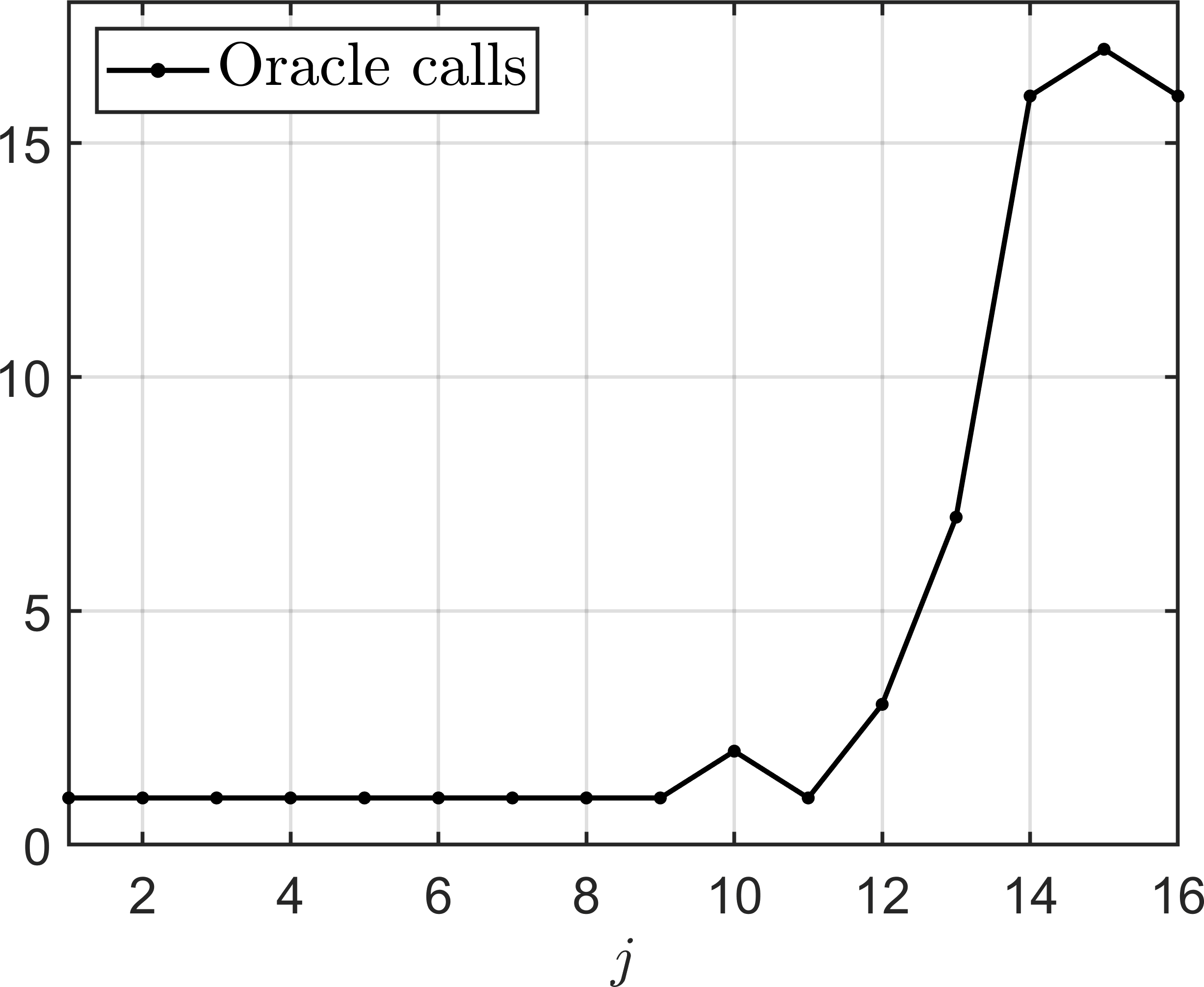}\\
                    (a)
        		}
                \parbox[b]{0.49\textwidth}{
                    \centering 
                    \includegraphics[width=0.45\textwidth]{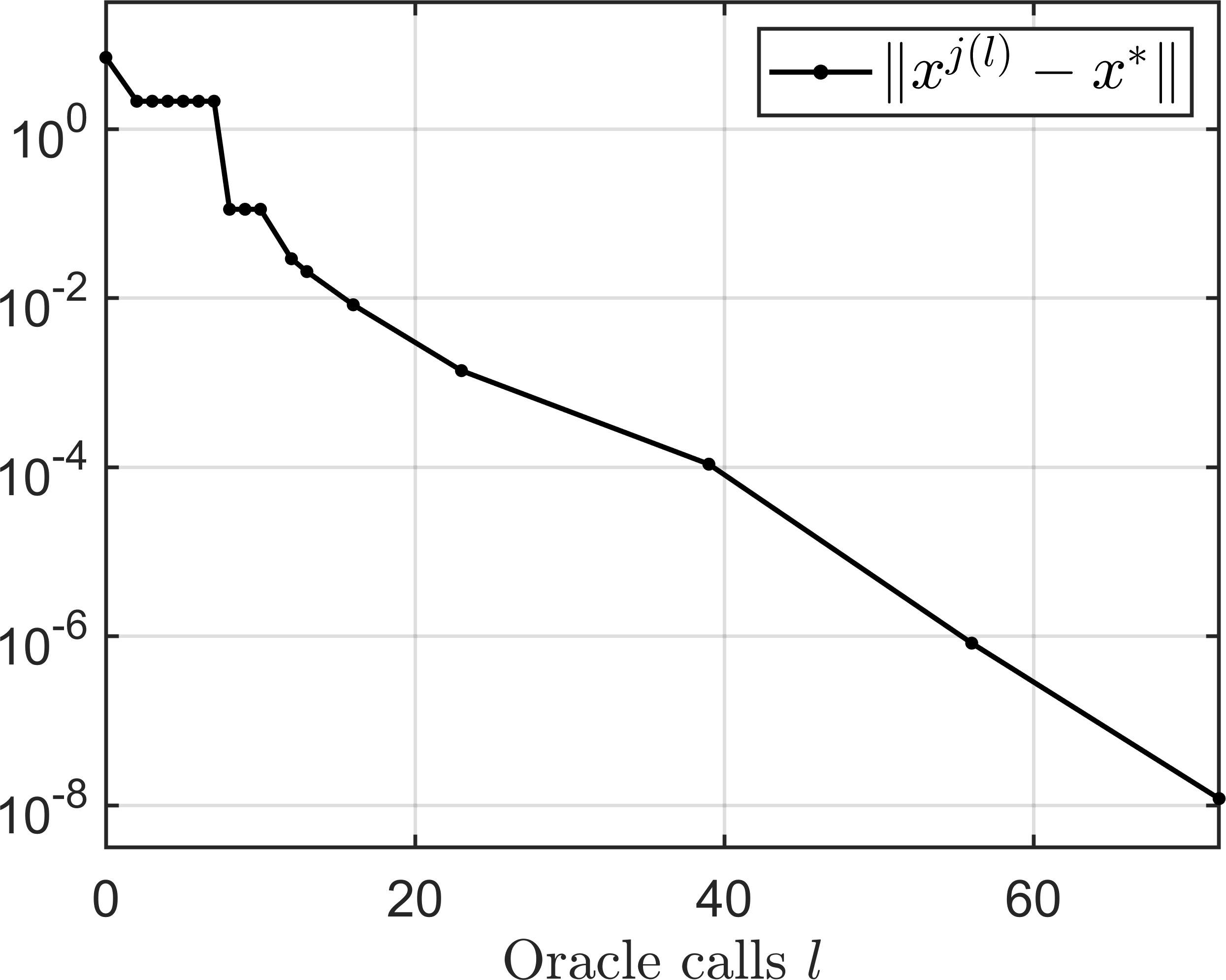}\\
                    (b)
        		}
                \caption{(a) The number of oracle calls required by \algApproxW{} in each iteration of \algLocal{} in Ex.\ \ref{example:LW2019_85}. (b) The distance $\| x^{j(l)} - x^* \|$ with $(x^{j(l)})_l$ as in Cor.\ \ref{cor:N_step_convergence}.}
                \label{fig:example_LW2019_85_II}
            \end{figure}
            We see that the closer $x^j$ is to the minimum, the more oracle calls are required, i.e., the larger the set $W_j$. Finally, Fig.\ \ref{fig:example_LW2019_85_II}(b) shows the speed of convergence with respect to oracle calls, i.e., it shows the distance $\| x^{j(l)} - x^* \|$ for the sequence $(x^{j(l)})_l$ from Cor.\ \ref{cor:N_step_convergence}, where $j(l)$ is the iteration of \algLocal{} in which the $l$-th oracle call occurred. (For simplicity, only the oracle calls where $j(l)$ changes are plotted.) Since we stop the algorithm already when $\eps_j \leq 10^{-3}$ (due to the accuracy of \ipopt{}), the R-superlinear convergence is not (yet) visible here.
        \end{example}

        In both examples considered so far, the objective was a finite max-type function. To show the behavior of \algLocal{} for \lc{2} functions that are not of finite max-type and for which no representation as in \eqref{eq:lower_Ck_representation} is practically available, we consider an example from the area of eigenvalue optimization \cite{O1992,FN1995,LW2019}, where the largest eigenvalue of an affine combination of matrices is minimized. Since $S$ is infinite in this case, we cannot use Lem.\ \ref{lem:algo_approx_W_termination}(b) to guarantee boundedness of the oracle calls in \algApproxW{} in Step \ref{state:local_method_approx_W} of \algLocal{}. Here, we also show a comparison to the \hanso{}\footnote{\url{https://cs.nyu.edu/~overton/software/hanso/} (Retrieved Mar.\ 24, 2026)} software package.

        \begin{example} \label{example:LW2019_eigval}
            For $n, m \in \N$ and symmetric matrices $A_0, \dots, A_n \in \R^{m \times m}$, consider the function
            \begin{align*}
                f : \R^n \rightarrow \R, \quad
                x \mapsto \lambda_{\text{max}}\left( A_0 + \sum_{i = 1}^m x_i A_i \right),
            \end{align*}
            where $\lambda_{\text{max}}(A)$ denotes the largest eigenvalue of a matrix $A$. This function is convex, and thus \lc{2} (cf.\ \cite{RW1998}, Thm.\ 10.33), but in general not of finite max-type (see \cite{O1992}, p.\ 89). It is bounded below if and only if there is no $x$ for which $\sum_{i = 1}^m x_i A_i$ is positive definite (cf.\ \cite{FN1995}, p.\ 227). We are not aware of results about the growth of $f$ around its minimum (if existing), and we blindly assume that it grows at least quadratically, i.e., with $p = 2$. We generate a random instance of this problem for $n = 50$ and $m = 25$ and apply \algLocal{} with $q = 2$, $x^1 = 0 \in \R^{50}$, $\eps^1 = 0.5$, and $\epsthr = 10^{-3}$. (For details on the random generation, see the corresponding code.) The subproblem \eqref{eq:bar_z_epigraph} is solved via \ipopt{}.
            Since an explicit expression for the minimum is not available for this example, we use \hanso{} (with starting point $x^1$ and default parameters) to compute a reference solution $\tilde{x}^*$. Fig.\ \ref{fig:example_LW2019_eigval}(a) shows the distance $\| x^j - \tilde{x}^* \|$ for the sequence $(x^j)_j$ generated by \algLocal{}.
            \begin{figure}
                \centering
                \parbox[b]{0.49\textwidth}{
                    \centering 
                    \includegraphics[width=0.45\textwidth]{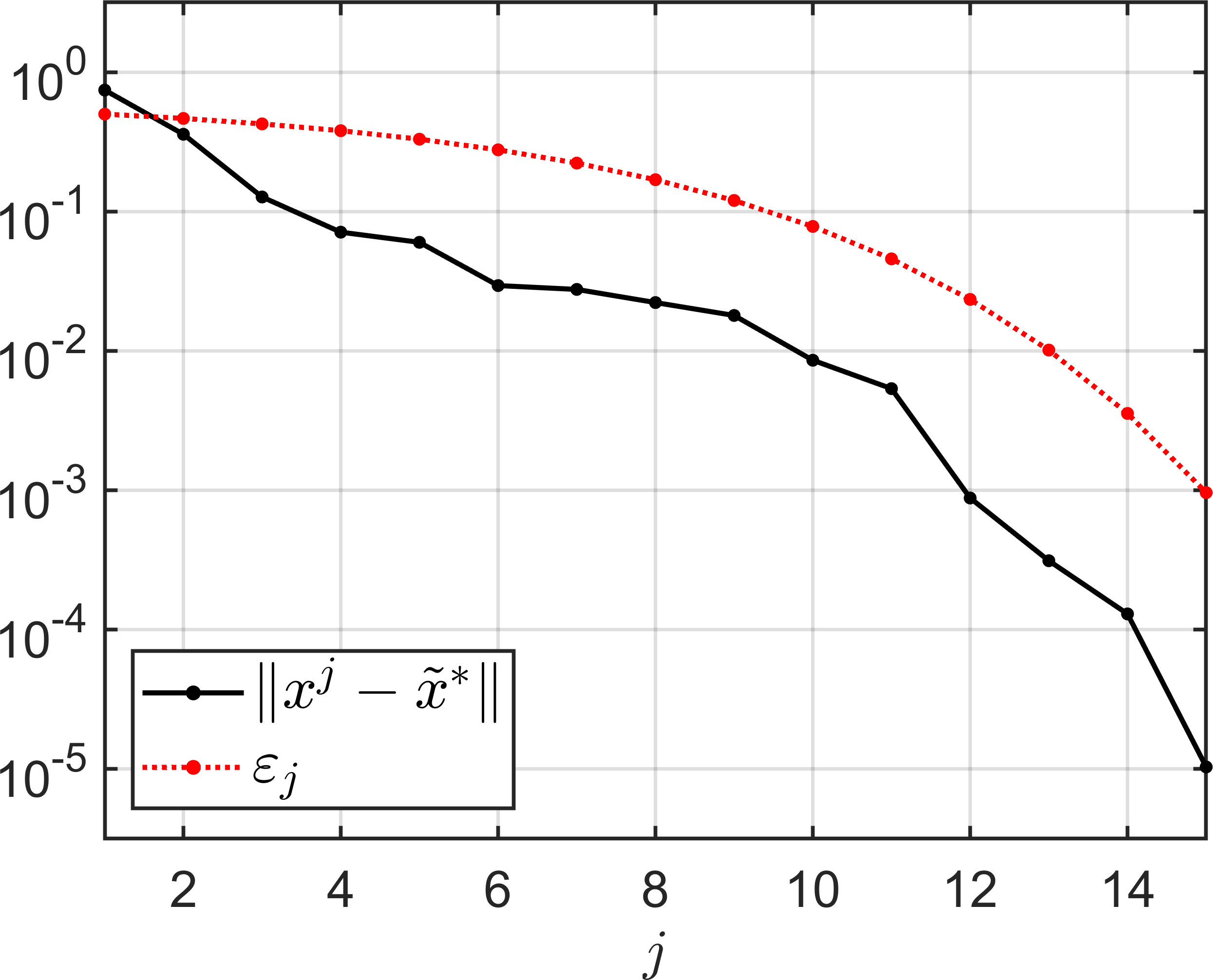}\\
                    (a)
        		}
                \parbox[b]{0.49\textwidth}{
                    \centering 
                    \includegraphics[width=0.45\textwidth]{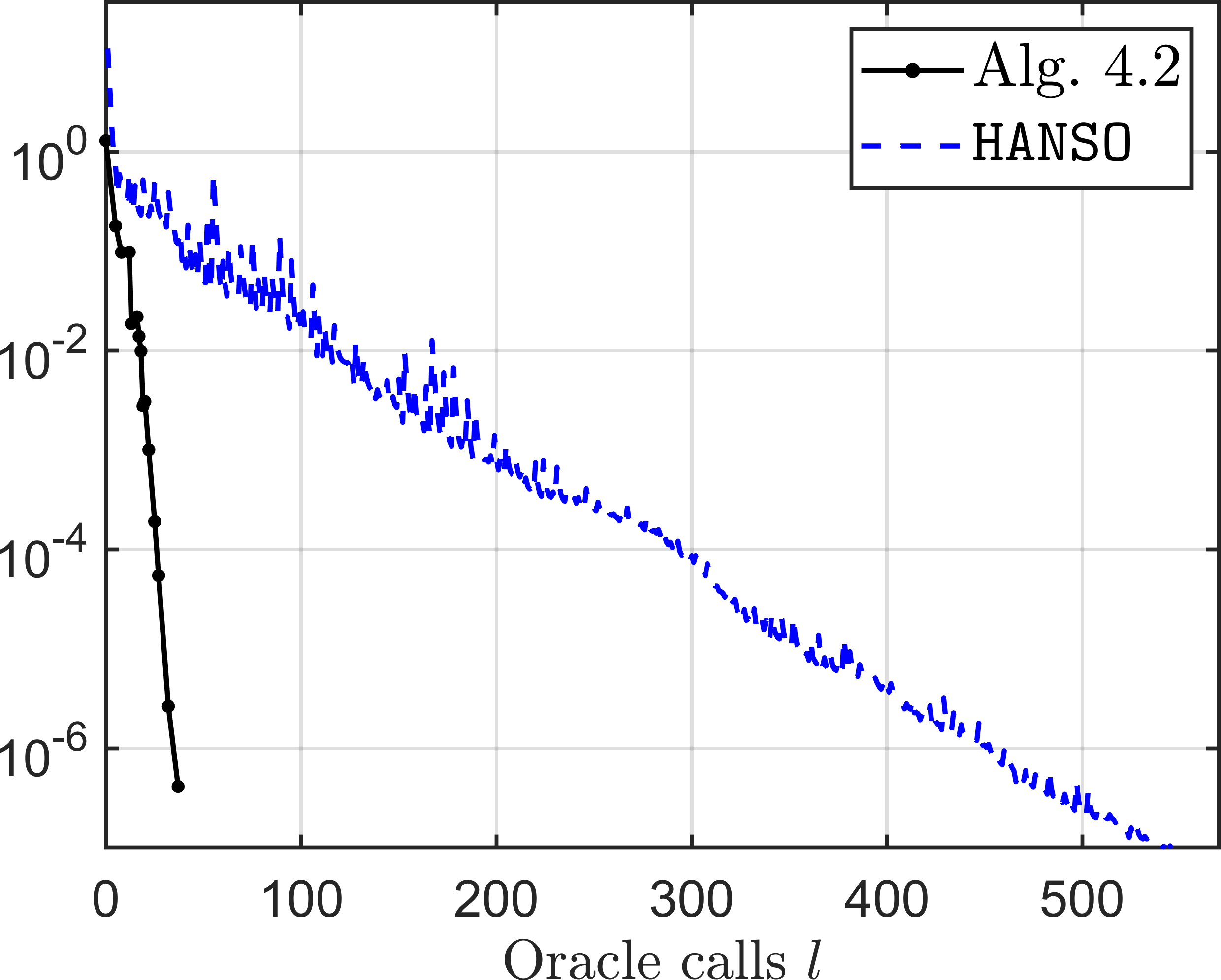}\\
                    (b)
        		}
                \caption{(a) The distance $\| x^j - \tilde{x}^* \|$ in Ex.\ \ref{example:LW2019_eigval} and the corresponding sequence $(\eps_j)_j$. (b) The distance of the objective values to the reference value $f(\tilde{x}^*)$ with respect to oracle calls for \algLocal{} and \hanso{}.}
                \label{fig:example_LW2019_eigval}
            \end{figure}
            Note that the expected convergence behavior can be observed despite the initial $x^1$ not being contained in $\Bcl_{\eps_1}(\tilde{x}^*)$. Fig.\ \ref{fig:example_LW2019_eigval}(b) shows, in black, the distance $f(x^{j(l)}) - f(\tilde{x}^*)$ for $j(l)$ as in Cor.\ \ref{cor:N_step_convergence}. Despite Lem.\ \ref{lem:algo_approx_W_termination}(b) not being applicable, we still observe fast convergence in terms of oracle calls, with \algApproxW{} needing at most $5$ oracle calls in every iteration of \algLocal{}. The blue, dashed line shows the objective value for all oracle calls performed by \hanso{} (cut off at $l = 570$ for better comparison). \hanso{} required $1006$ oracle calls to obtain its final point $\tilde{x}^*$. \algLocal{} required $37$ oracle calls for the final point $x^{15}$, which satisfies $f(x^{15}) - f(\tilde{x}^*) \approx 4.1 \cdot 10^{-7}$. To reach a point with an objective value less than $f(x^{15})$, \hanso{} required $475$ oracle calls. In terms of runtime, \algLocal{} required $2.61$s and \hanso{} required $2.87$s. So while \algLocal{} needed far fewer oracle calls than \hanso{}, the time required for solving the subproblem \eqref{eq:bar_z_epigraph} evens out the comparison here. Furthermore, one should keep in mind that \algLocal{} requires the Hessian matrix for its oracle (when $q \geq 2$), whereas \hanso{} only requires the objective value and the gradient.
        \end{example}

        For the final two experiments, we compare \algLocal{} to other superlinear solvers for nonsmooth optimization problems. The first one is \vubundle{}, which is a Julia implementation of the $\mathcal{V}\mathcal{U}$-bundle method from \cite{MS2005}, and only requires objective values and (sub)gradients as oracle information. As a test problem, we use the convex function from \cite{LO2008}, Sec.\ 5.5 (named \emph{Half-and-half} in \cite{MS2012}), which is again not a finite max-type function.

        \begin{example} \label{example:halfhalf}
            Consider the convex function
            \begin{equation*}
                \begin{aligned}
                    &f : \R^8 \rightarrow \R, \quad x \mapsto \sqrt{x^\top A x} + x^\top B x, \\
                    &A_{i,j} := 
                    \begin{cases}
                        1, & i = j \in \{1,3,5,7\},\\
                        0, & \text{otherwise,}
                    \end{cases}, \quad 
                    B_{i,j} := 
                    \begin{cases}
                        1/i^2, & i = j,\\
                        0, & \text{otherwise.}
                    \end{cases}
                \end{aligned}
            \end{equation*}
            The global minimum is $x^* = 0 \in \R^8$, around which $f$ grows with order $p = 2$. We apply \algLocal{} with $q = 2$, $x^1 = (20.08, \dots, 20.08) \in \R^8$ (as in \cite{MS2012}, p.\ 298), $\eps_1 = 30$, and $\epsthr = 10^{-3}$. The subproblem \eqref{eq:bar_z_epigraph} is solved via \ipopt{}. For \vubundle{} we use the default parameters and the same starting point $x^1$. Fig.\ \ref{fig:example_halfhalf}(a) shows the distance $\| x^j - x^* \|$ for the sequence $(x^j)_j$ generated by \algLocal{}.
            \begin{figure}
                \centering
                \parbox[b]{0.49\textwidth}{
                    \centering 
                    \includegraphics[width=0.45\textwidth]{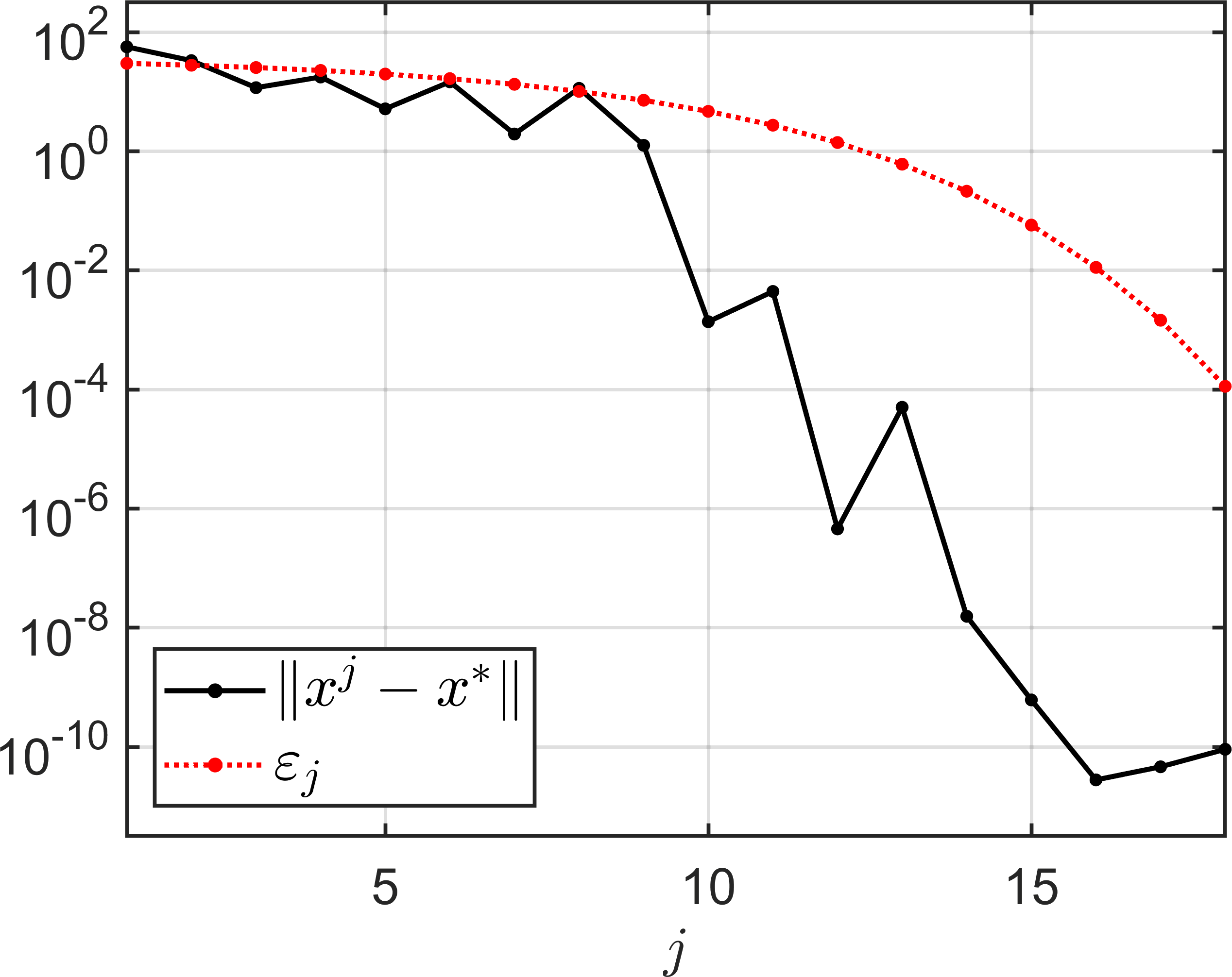}\\
                    (a)
        		}
                \parbox[b]{0.49\textwidth}{
                    \centering 
                    \includegraphics[width=0.45\textwidth]{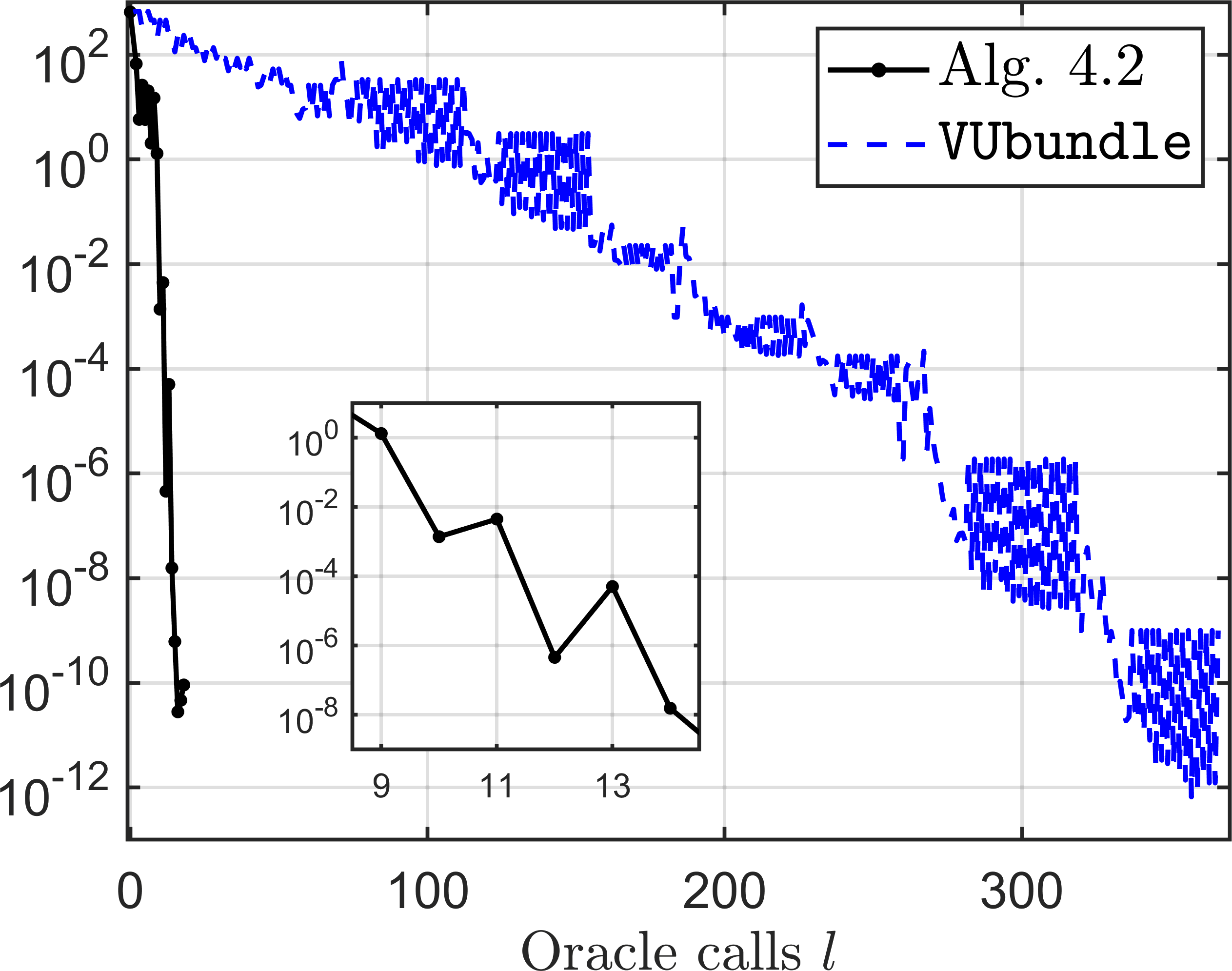}\\
                    (b)
        		}
                \caption{(a) The distance $\| x^j - x^* \|$ in Ex.\ \ref{example:halfhalf} and the corresponding sequence $(\eps_j)_j$. (b) The distance of the objective values to the optimal value $f(x^*)$ with respect to oracle calls for \algLocal{} and \vubundle{}. (The zoom on the result of \algLocal{} shows that it is not a descent method.)}
                \label{fig:example_halfhalf}
            \end{figure}
            We see (roughly) R-superlinear convergence despite multiple of the early iterates $x^j$ not being contained in the corresponding $\Bcl_{\eps_j}(x^*)$. (The lack of improvement once the distance lies below $10^{-10}$ is due to the accuracy of \ipopt{}.) Fig.\ \ref{fig:example_halfhalf}(b) shows, in black, the distance $f(x^{j(l)}) - f(x^*)$ for $j(l)$ as in Cor.\ \ref{cor:N_step_convergence}. (Due to the bundling in \algApproxW{} (cf.\ Rem.\ \ref{rem:bundle_initialization}(a)), every iteration of \algLocal{} only requires a single oracle call for this example.) Since the numbers of oracle calls for objective values and for gradients in \vubundle{} are not equal, and since gradients are typically more costly to compute than objective values, we use the number of gradient evaluations for \vubundle{} in our comparison, shown as the blue, dotted line. While it suggests that for this function, \algLocal{} is more efficient than \vubundle{} in terms of oracle calls, the fact that oracle calls are cheap means that the overall runtime is far slower, with our implementation of \algLocal{} requiring $0.58$s and \vubundle{} only requiring $0.016$s.
        \end{example}

        For the second comparison, we consider the Julia implementation of \superpolyak{} from \cite{CD2024}, which converges superlinearly for functions with a sharp minimum (i.e., \ref{assum:A2} holds with $p = 1$). It requires the objective values and (sub)gradients as oracle information. Additionally, it requires the optimal value $f(x^*)$ to be known. As a test problem, we again consider the nonconvex function from Ex.\ \ref{example:1d_symbolic}. Since $p = 1$ in this case, it suffices to choose $q = 1$ for \algLocal{}, which means that the model is a standard cutting-plane model. By choosing the maximum norm for the trust region (and omitting the exponent $2$ in the constraint), the subproblem \eqref{eq:bar_z_epigraph} then becomes a linear problem, which we can solve using Matlab's \texttt{linprog} solver. 

        \begin{example} \label{example:max_root}
            Consider the function $f$ from Ex.\ \ref{example:1d_symbolic} with $n = 100$. We apply \algLocal{} with $q = 1$, $x^1 = (0.001,0.002,\dots,0.1)^\top \in \R^{100}$, $\eps_1 = 0.5$, and $\epsthr = 10^{-7}$. The subproblem \eqref{eq:bar_z_epigraph} is solved as discussed above. For \superpolyak{} we use the default parameters and the same starting point $x^1$. Fig.\ \ref{fig:example_max_root}(a) shows the distance $\| x^j - x^* \|$ for the sequence $(x^j)_j$ generated by \algLocal{}, suggesting R-superlinear convergence (despite again violating the first inequality in \eqref{eq:local_method_convergence}).
            \begin{figure}
                \centering
                \parbox[b]{0.49\textwidth}{
                    \centering 
                    \includegraphics[width=0.45\textwidth]{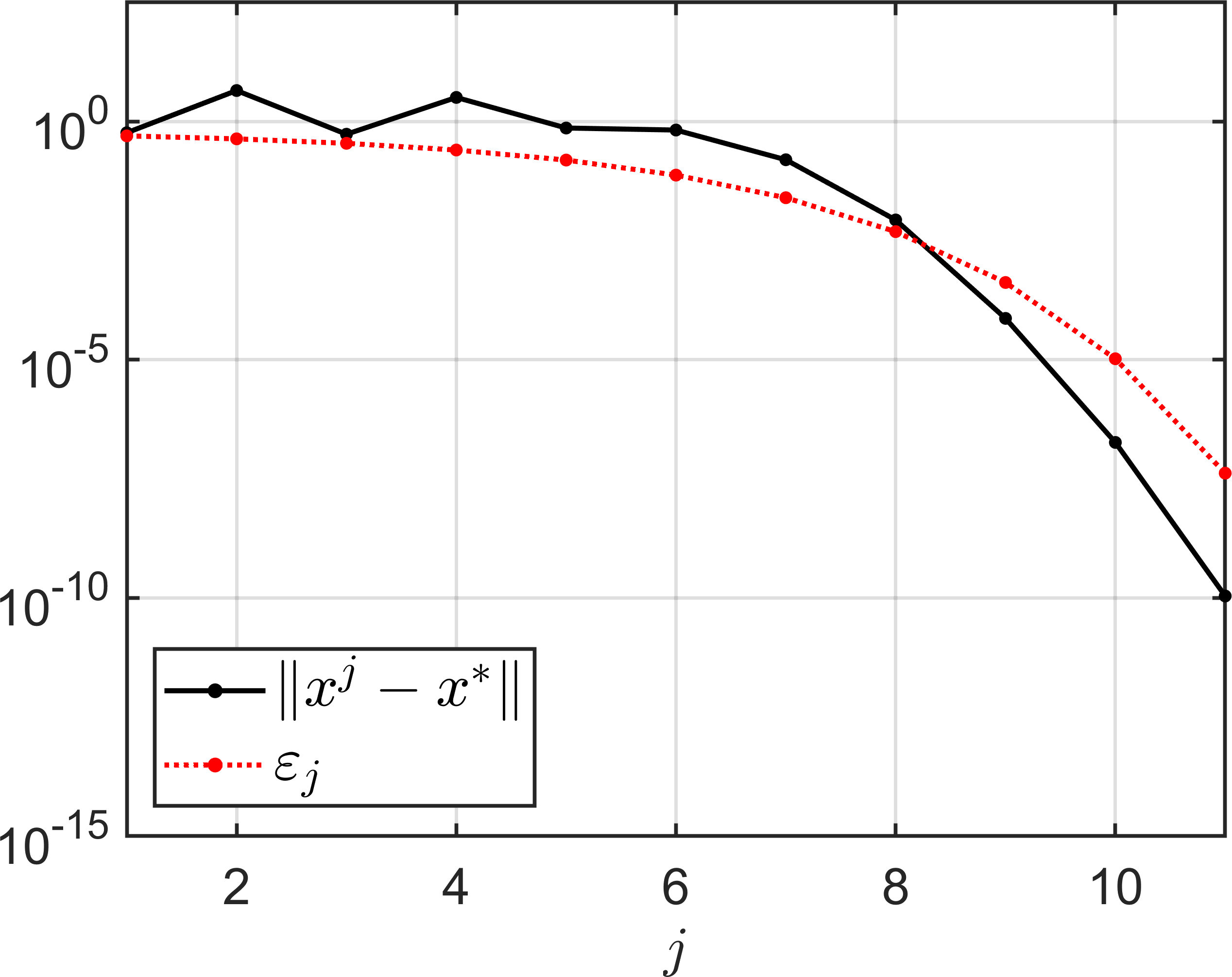}\\
                    (a)
        		}
                \parbox[b]{0.49\textwidth}{
                    \centering 
                    \includegraphics[width=0.45\textwidth]{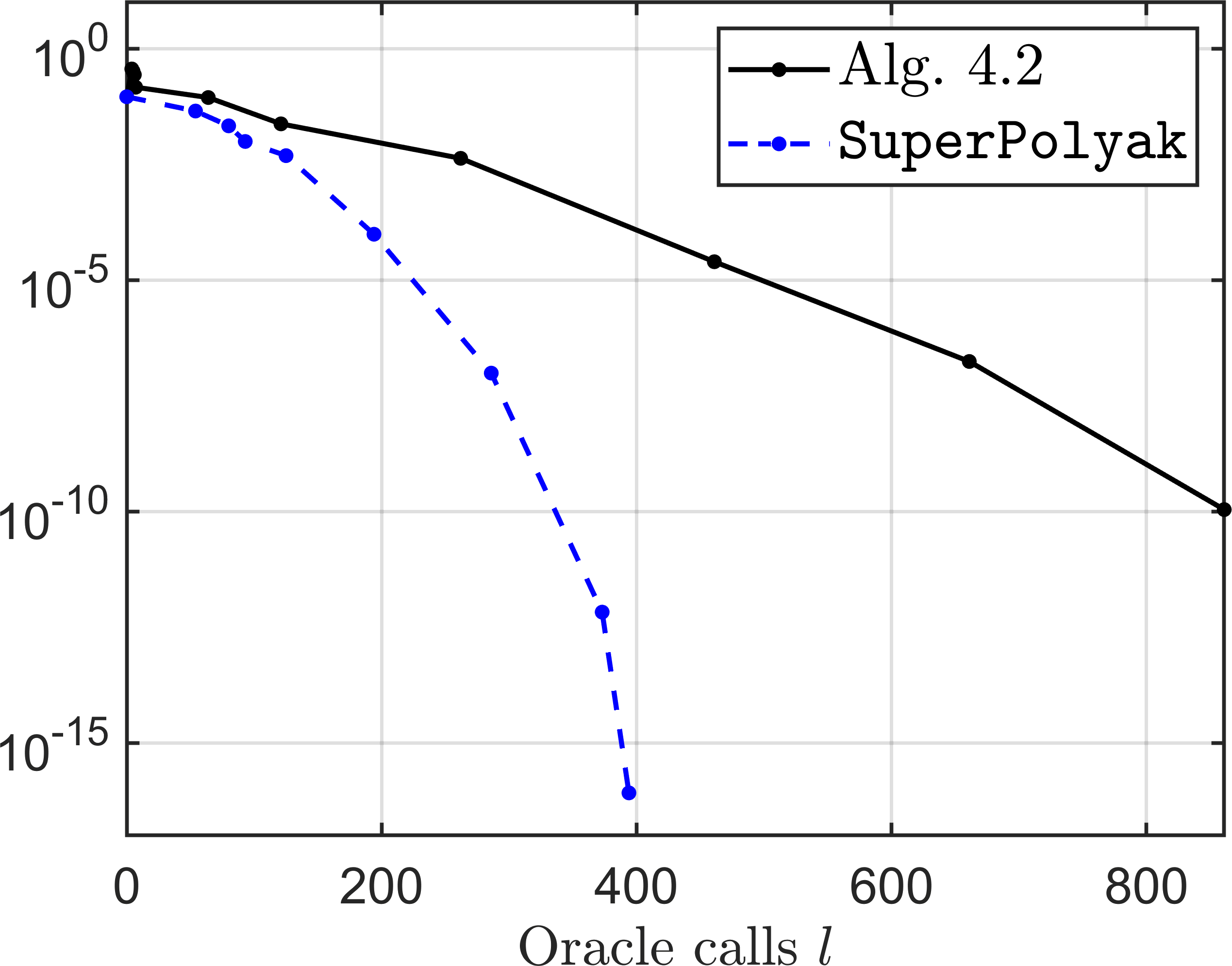}\\
                    (b)
        		}
                \caption{(a) The distance $\| x^j - x^* \|$ in Ex.\ \ref{example:max_root} and the corresponding sequence $(\eps_j)_j$. (b) The distance of the objective values to the optimal value $f(x^*)$ with respect to oracle calls for \algLocal{} and \superpolyak{}.}
                \label{fig:example_max_root}
            \end{figure}
            Fig.\ \ref{fig:example_max_root}(b) shows, in black, the distance $f(x^{j(l)}) - f(x^*)$ for $j(l)$ as in Cor.\ \ref{cor:N_step_convergence}. For $j \geq 9$, \algApproxW{} required the full $|S| = 2n = 200$ iterations (cf.\ Lem.\ \ref{lem:algo_approx_W_termination}(b)), which explains the relatively slow convergence of $(x^{j(l)})_l$. The blue, dotted line shows the number of oracle calls for \superpolyak{}. We see that \superpolyak{} converges significantly faster than \algLocal{}, only needing about half the number of oracle calls. The difference in runtime is even larger, with \algLocal{} requiring $3.39$s (due to the many iterations of \algApproxW{}) and \superpolyak{} only $0.0028$s. 
        \end{example}

\section{Discussion and outlook} \label{sec:discussion_and_outlook}

    We defined higher-order cutting-plane models for \lc{2} functions and showed how they can be used to construct a trust-region bundle method with local R-superlinear convergence. There are multiple directions for future research:
    \begin{itemize}
        \item In the numerical experiments, we used the derivatives of $f$ itself as our oracle information. For $q = 1$, this yields an oracle as in Oracle \ref{oracle:1} (cf.\ the discussion at the beginning of Sec.\ \ref{sec:higher_order_cutting_plane_models}). However, for $q \geq 2$, this may not be the case. For example, for $f$ in Ex.\ \ref{example:halfhalf}, the Hessian matrix $\nabla^2 f(x)$ is unbounded for $x \rightarrow 0 \in \R^n$. Since the selection functions in Def.\ \ref{def:lower_Ck} are continuous in $(s,x)$, there cannot be a representation with $\nabla^2 f_{s(x)}(x) = \nabla^2 f(x)$ for infinitely many $x$ arbitrarily close to $0$. Thus, the practical oracle differs from Oracle \ref{oracle:1} for this function. Nonetheless, this was no issue in any of our numerical experiments. Resolving this gap from theory to practice likely requires more theoretical analysis of the meaning of derivatives of selection functions in local representations of \lc{2} functions. For example, we expect that for finite max-type functions, it is possible to show that there is a local representation of $f$ for which the practical oracle equals the theoretical oracle. Analyzing how well \lc{2} functions can be approximated by finite max-type functions may then close the above gap.
        \item The numerical experiments showed that if functions evaluations are cheap, then in terms of runtime, the current implementation of \algLocal{} is significantly slower than other solvers (on their respective problem classes). In Ex.\ \ref{example:max_root}, roughly $95\%$ of the runtime is taken up by the solution of the subproblem \eqref{eq:bar_z_epigraph}, so a faster implementation can only be obtained by employing a different approach for solving this subproblem. For $q = 2$, it might be possible to exploit the fact that \eqref{eq:bar_z_epigraph} is a \emph{quadratically constrained quadratic program} (QCQP), for which specialized solvers exist (see, e.g., \cite{L2005}). Alternatively, one could consider approximate solutions, since intuitively, exact solutions should only be necessary ``in the limit'' as the sequence approaches the minimum. For example, it may be possible to use ideas from SQP methods to first estimate the Lagrange multipliers in \eqref{eq:bar_z_epigraph} and then compute an approximate solution based on the estimated multipliers, similar to the approach of \cite{LW2019}, Sec.\ 3.6. 
        \item Our convergence theory for \algLocal{} technically requires global solutions of the subproblem \eqref{eq:bar_z_epigraph} (for the inequality \eqref{eq:proof_lem_T_W_error_estimate} in the proof of Lem.\ \ref{lem:T_W_error_estimate}). While we did not encounter any convergence issues in our numerical experiments when using non-global solvers, the effect of local solutions of \eqref{eq:bar_z_epigraph} on the convergence still has to be properly analyzed.
        \item Clearly, derivatives of an order larger than $1$ may be cumbersome to provide and to work with. Considering the derivation of quasi-Newton methods from Newton's method in smooth optimization, it should be analyzed whether there is a quasi-Newton version of \algLocal{} that only requires first-order information and approximates the Hessian matrix.
        \item We have only presented one way to overcome the Challenges \ref{enum:C1}, \ref{enum:C2}, and \ref{enum:C3}. Other approaches may lead to different superlinearly convergent methods, which could be superior to \algLocal{}.
    \end{itemize}

\vspace{10pt}

\setlength{\bibsep}{0pt plus 0.3ex}

\noindent \textbf{Acknowledgements.} \quad This research was funded by Deutsche Forschungsgemeinschaft (DFG, German Research Foundation) – Projektnummer 545166481.

\bibliography{references}

\end{document}